\documentclass[12pt]{amsart}
\usepackage{amsmath,amssymb,fullpage}
\usepackage[active]{srcltx} 

\vfuzz2pt 
\hfuzz2pt 

\newtheorem{theorem}{Theorem}[section]

\newtheorem{corollary}[theorem]{Corollary}

\newtheorem{remark}[theorem]{Remark}

\newtheorem{example}[theorem]{Example}

\numberwithin{equation}{section}

\newcommand{\R}{\mathbb{R}}
\newcommand{\N}{\mathbb{N}}
\newcommand{\Z}{\mathbb{Z}}
\newcommand{\C}{\mathbb{C}}

\newcommand{\cF}{\mathcal{F}}
\newcommand{\cM}{\mathcal{M}}
\newcommand{\dis}{\displaystyle}
\newcommand{\supp}{\textup{supp}}
\newcommand{\dist}{\textup{dist}}
\newcommand{\diam}{\textup{diam}}
\newcommand{\ca}{\textup{cap}}
\newcommand{\eps}{\varepsilon}
\newcommand{\li}{\textup{li}}


\begin{document}

\title[Equidistribution of points via energy]
{Equidistribution of points via energy}%
\author{Igor E. Pritsker}%

\thanks{Research was partially supported by the National Security
Agency, and by the Alexander von Humboldt Foundation.}

\address{Department of Mathematics, Oklahoma State University, Stillwater, OK 74078, U.S.A.}%
\email{igor@math.okstate.edu}

\subjclass[2000]{Primary 31C20; Secondary 30C15, 31C15, 11C08}%
\keywords{Equilibrium measure, discrete energy, Fekete points, minimum energy, polynomials, zero distribution, discrepancy.}%



\begin{abstract}

We study the asymptotic equidistribution of points with discrete energy close to Robin's constant of a compact set in the plane. Our main tools are the energy estimates from potential theory. We also consider the quantitative aspects of this equidistribution. Applications include estimates of growth for the Fekete and Leja polynomials associated with large classes of compact sets, convergence rates of the discrete energy approximations to Robin's constant, and problems on the means of zeros of polynomials with integer coefficients.

\end{abstract}

\maketitle


\section{Asymptotic equidistribution of discrete sets}

Let $E$ be a compact set in the complex plane $\C.$ Given a set of points $Z_n=\{z_{k,n}\}_{k=1}^n\subset\C,\ n\ge 2$, the associated Vandermonde determinant is
\[
V(Z_n):=\prod_{1\le j<k\le n} (z_{j,n}-z_{k,n}).
\]
Let the $n$-th diameter of $E$ be defined by
\[
\delta_n(E):=\max_{Z_n\subset E} |V(Z_n)|^{\frac{2}{n(n-1)}}.
\]
A set of points $\cF_n$ is called the $n$-th Fekete points of $E$ if it achieves the above maximum. The classical result of Fekete \cite{Fe} states that $\delta_n(E),\ n\ge 2,$ form a decreasing sequence that converges to a limit called the transfinite diameter $\delta(E)$. Szeg\H{o} \cite{Sz2} found that $\delta(E)$ is equal to the logarithmic capacity cap$(E)$ from potential theory, which is defined as follows. For a Borel measure $\mu$ with compact support, define its energy by \cite[p. 54]{Ts}
\[
I[\mu]:= \iint \log \dis\frac{1}{|z-t|} \, d \mu(t)d \mu(z).
\]
Consider the problem of finding the minimum energy
\[
V_E:=\inf_{\mu\in\cM(E)} I[\mu],
\]
where $\cM(E)$ is the space of all positive unit Borel measures supported on $E$. The capacity of $E$ is given by
\[
\ca(E):=e^{-V_E}.
\]
If Robin's constant $V_E$ is finite (i.e. $\ca(E)\neq 0$), then the infimum is attained by the equilibrium measure $\mu_E\in\cM(E)$ \cite[p. 55]{Ts}, which is a unique probability measure expressing the steady state distribution of charge on the conductor $E$. For detailed expositions of potential theory, we refer the reader to the books of Ransford \cite{Ra}, Tsuji \cite{Ts}, and Landkof \cite{La}.

Consider the counting measure $\tau(Z_n)$ for the set $Z_n=\{z_{k,n}\}_{k=1}^n$, given by
\[
\tau(Z_n) := \frac{1}{n} \sum_{k=1}^n \delta_{z_{k,n}},
\]
where $\delta_{z_{k,n}}$ is the unit point mass at $z_{k,n}\in Z_n$.
It is clear that $I[\tau(Z_n)]=\infty,$ but we can define the {\em discrete} energy of $\tau(Z_n)$ (or of the set $Z_n$) by setting
\[
\hat{I}[\tau(Z_n)] := - \log |V(Z_n)|^{\frac{2}{n(n-1)}} = \frac{2}{n(n-1)} \sum_{1\le j<k\le n} \log\frac{1}{|z_{j,n}-z_{k,n}|}.
\]
Note that the discrete energy $\hat{I}[\tau(Z_n)]$ is finite if and only if all points of $Z_n$ are distinct. The Fekete-Szeg\H{o} results may be restated as
\[
\lim_{n\to\infty} \inf_{Z_n\subset E} \hat{I}[\tau(Z_n)] = \lim_{n\to\infty} \hat{I}[\tau(\cF_n)] = \lim_{n\to\infty} \left(-\log \delta_n(E)\right) = V_E=I[\mu_E],
\]
which simply indicates that the discrete approximations of the minimum energy converge to Robin's constant, see \cite[p. 153]{Ra}. It is also well known that the counting measures $\tau(\cF_n)$ converge to $\mu_E$  in the weak-* topology (written $\tau(\cF_n) \stackrel{*}{\rightarrow} \mu_E$) as $n\to\infty$, provided that $\ca(E)>0$ \cite[p. 226]{AB}. Such equidistribution property is shared by many sequences of discrete sets whose energies converge to Robin's constant, see Andrievskii and Blatt \cite{AB} for history and references. Our new equidistribution result is as follows.

For an arbitrary compact set $E\subset\C,$ let $\Omega_E$ be the unbounded connected component of $\overline{\C}\setminus E$. If $\ca(E)>0$ then the Green function $g_E(z,\infty)$ for $\Omega_E$ with pole at $\infty$ \cite[p. 14]{Ts} is well defined. We use the quantity
\[
m_E(Z_n) := \frac{1}{n} \sum_{z_{k,n}\in\Omega_E} g_E(z_{k,n},\infty)
\]
to measure how close $Z_n$ is to $E$. If $Z_n\cap\Omega_E=\emptyset$ then we set $m_E(Z_n)=0$ by definition.

\begin{theorem} \label{thm1.1}
Let $E\subset\C$ be compact, $\ca(E)>0$. If the sets $Z_n=\{z_{k,n}\}_{k=1}^n\subset\C,\ n\ge 2,$ satisfy
\begin{align} \label{1.1}
\lim_{n\to\infty} \hat{I}[\tau(Z_n)] = V_E
\end{align}
and
\begin{align} \label{1.2}
\lim_{n\to\infty} m_E(Z_n) = 0,
\end{align}
then
\begin{align} \label{1.3}
\left\{
\begin{array}{l}
(i)\ \dis\tau(Z_n) \stackrel{*}{\rightarrow} \mu_E \mbox{ as }n\to\infty, \\ \\ (ii) \dis\lim_{R\to\infty} \lim_{n\to\infty} \frac{1}{n} \sum_{|z_{k,n}|\ge R} \log|z_{k,n}| = 0.
\end{array}
\right.
\end{align}
Conversely, \eqref{1.2} holds for any sequence of the sets $Z_n=\{z_{k,n}\}_{k=1}^n\subset\C,\ n\in\N,$ satisfying \eqref{1.3}.
\end{theorem}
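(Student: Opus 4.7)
The plan is the classical energy approach: extract weak-* subsequential limits $\mu$ of $\tau(Z_n)$, show each is supported on a compact set, and prove $I[\mu]\le V_E$ to force $\mu=\mu_E$. First I would establish tightness. The Green-function asymptotic $g_E(z,\infty)=\log|z|+V_E+o(1)$ as $|z|\to\infty$ gives $g_E(z,\infty)\ge \log R+V_E-1>0$ on $\{|z|\ge R\}\subset\Omega_E$ for $R$ large, so
\[
\tau(Z_n)\bigl(\{|z|\ge R\}\bigr)\le \frac{m_E(Z_n)}{\log R+V_E-1}\longrightarrow 0.
\]
Extending $g_E(\cdot,\infty)$ by $0$ to $\C\setminus\Omega_E$ yields a nonnegative lower semicontinuous function on $\C$, so the portmanteau theorem applied to a weak-* subsequential limit $\mu$ of $\tau(Z_n)$ gives $\int g_E\,d\mu\le\liminf m_E(Z_n)=0$, whence $\mu(\Omega_E)=0$ and $\mu$ is supported on the compact set $\C\setminus\Omega_E$.

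To identify $\mu$ with $\mu_E$ I would use the truncated-kernel method. With $k_M(z):=\min(\log(1/|z|),M)$, separating diagonal from off-diagonal contributions gives
\[
I_M[\tau(Z_n)]\le \frac{M}{n}+\frac{n-1}{n}\hat I[\tau(Z_n)]\longrightarrow V_E.
\]
Truncating $k_M$ further at $-L$ from below produces a bounded continuous kernel, for which weak-* convergence of the product measures $\tau(Z_n)\otimes\tau(Z_n)\to\mu\otimes\mu$ yields the corresponding inequality; sending $L\to\infty$ (legitimate since $\mu$ has compact support) gives $I_M[\mu]\le V_E$ for every $M$. An atom of mass $a>0$ would force $I_M[\mu]\ge a^2M\to\infty$, so $\mu$ is non-atomic, and monotone convergence then delivers $I[\mu]\le V_E$. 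Uniqueness of the equilibrium measure forces $\mu=\mu_E$, and since every subsequential limit agrees, the full sequence converges, proving (i). For (ii), rewriting the same asymptotic as $\log|z|=g_E(z,\infty)-V_E+o(1)$ for $|z|\ge R$ yields, for $R$ larger than $\diam\supp\mu_E$,
\[
\biggl|\frac{1}{n}\sum_{|z_{k,n}|\ge R}\log|z_{k,n}|\biggr|\le m_E(Z_n)+(|V_E|+1)\,\tau(Z_n)\bigl(\{|z|\ge R\}\bigr),
\]
and both right-hand terms tend to $0$ as $n\to\infty$ by (i) combined with the tightness established above.

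For the converse I reverse the splitting. Write $m_E(Z_n)=\int_{|z|<R}g_E\,d\tau(Z_n)+\int_{|z|\ge R}g_E\,d\tau(Z_n)$. Using the asymptotic for $g_E$ the outer integral is bounded by the (ii)-quantity plus $(V_E+1)\tau(Z_n)(\{|z|\ge R\})$, both of which vanish as $n\to\infty$ and then $R\to\infty$ since $\mu_E$ has compact support. The inner integral converges to $\int g_E\,d\mu_E=0$ by the portmanteau theorem applied to a bounded almost-everywhere continuous function: $g_E$ is bounded on $\{|z|<R\}$, continuous on $\Omega_E$ and identically $0$ off $\overline{\Omega_E}$, with remaining discontinuities confined to the irregular subset of $\partial\Omega_E$, a polar set and hence $\mu_E$-null because $\mu_E$ has finite energy. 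The main obstacle I anticipate is reconciling the unbounded and possibly discontinuous Green-function kernel with weak-* convergence: the asymptotic at infinity controls escape of mass, while the fact that irregular boundary points form a polar set absorbs the discontinuity issue at $\partial E$.
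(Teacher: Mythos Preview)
Your overall strategy is the same truncated-kernel energy argument as the paper's, and your converse via the portmanteau theorem for the $\mu_E$-a.e.\ continuous extension of $g_E$ is correct (the paper instead uses upper semicontinuity of the subharmonic extension $V_E-U^{\mu_E}$, but the two are equivalent here). There is, however, a real gap in the forward direction at the $L$-truncation step.

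You correctly obtain $I_M[\tau_n]\le M/n+\tfrac{n-1}{n}\hat I[\tau_n]\to V_E$. But truncating $k_M$ from below gives $k_M^L\ge k_M$, hence $I_M^L[\tau_n]\ge I_M[\tau_n]$, and your upper bound does \emph{not} transfer to $I_M^L[\tau_n]$. Weak convergence of product measures yields $I_M^L[\tau_n]\to I_M^L[\mu]=I_M[\mu]$ (the last equality for $L$ large, since $\mu$ is compactly supported), so to conclude $I_M[\mu]\le V_E$ you must control the excess
\[
I_M^L[\tau_n]-I_M[\tau_n]=\int_{|z-w|>e^L}\bigl(\log|z-w|-L\bigr)\,d(\tau_n\otimes\tau_n).
\]
Tightness alone (i.e., $\tau_n(\{|z|>R\})\to 0$) is insufficient: you need the stronger fact $\int_{|z|>R}\log|z|\,d\tau_n\to 0$, which is exactly (1.3)(ii). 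You do prove (ii), but you place it after (i) and cite (i) in its derivation; in fact your bound
\[
\Bigl|\tfrac{1}{n}\sum_{|z_{k,n}|\ge R}\log|z_{k,n}|\Bigr|\le m_E(Z_n)+(|V_E|+1)\,\tau_n(\{|z|\ge R\})
\]
uses only \eqref{1.2} and the tightness you already established, so (ii) is independent of (i). The fix is simply to reorder: prove (ii) first, then feed it into the $L$-truncation to show the excess tends to $0$ for each fixed large $L$. The paper handles the same issue by a slightly different device: it proves (ii) first, then restricts $\tau_n$ to $\hat\tau_n$ supported in a fixed disk $D_R$, uses (ii) to show $\limsup_n\hat I[\hat\tau_n]\le V_E+\eps$, and applies the $M$-truncated kernel only to $\hat\tau_n$; on $\overline{D_R}\times\overline{D_R}$ that kernel is already bounded below, so no second truncation is needed.
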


When $Z_n\subset E,$ we clearly have that $m_E(Z_n)=0$ for all $n\ge 2,$ and \eqref{1.1} implies the well known fact that $\tau(Z_n) \stackrel{*}{\rightarrow} \mu_E \mbox{ as }n\to\infty.$ A new feature of the above result is that $Z_n$ is not required to be a subset of $E$. Introduction of $m_E(Z_n)$ is inspired by the generalized Mahler measure that was used in \cite{Pr4} to study the asymptotic zero distribution for polynomials with integer coefficients. Theorem \ref{thm1.1} is a generalization of Theorem 2.1 from \cite{Pr4}. The majority of equidistribution results in analysis are stated in terms of zeros of polynomials, with the assumptions expressed via the supremum norms $\|P_n\|_E:=\sup_{z\in E} |P_n(z)|$ of polynomials, see \cite{AB}. We recall one of the most frequently used results of this kind, due to Blatt, Saff and Simkani \cite{BSS}.

\noindent
{\bf Theorem BSS.} {\em Let $E\subset\C$ be a compact set, $\ca(E)>0$, and set $E^*:=\supp(\mu_E)$.  If the sets $Z_n=\{z_{k,n}\}_{k=1}^n\subset\C,\ n\ge 2,$ and the corresponding polynomials $P_n(z):=\prod_{k=1}^n (z-z_{k,n})$ satisfy
\begin{align} \label{1.4}
\lim_{n\to\infty} \|P_n\|_{E^*}^{1/n} = \ca(E)
\end{align}
and
\begin{align} \label{1.5}
\lim_{n\to\infty} \tau_n(A) = 0,
\end{align}
where $\tau_n:=\tau(Z_n)$ and $A$ is any closed set in the bounded components of $\C\setminus E^*$, then
\begin{align} \label{1.6}
\tau_n \stackrel{*}{\rightarrow} \mu_E \mbox{ as }n\to\infty.
\end{align}
}
We note that \eqref{1.4} implies \eqref{1.2} because
\begin{align*}
m_E(Z_n) &\le \frac{1}{n} \sum_{k=1}^n g_E(z_{k,n},\infty) = \int g_E(z,\infty)\,d\tau_n(z) \\ &= \int \left(\int\log|z-t|\,d\mu_E(t) - \log \ca(E) \right)\,d\tau_n(z) \\ &= \int\int\log|z-t|\,d\tau_n(z)\,d\mu_E(t) - \log \ca(E) \\ &= \int\log|P_n(t)|^{1/n}\,d\mu_E(t) - \log \ca(E) \le \log\|P_n\|_{E^*}^{1/n} - \log \ca(E),
\end{align*}
where we used the standard representation of $g_E(z,\infty)$ given in \eqref{5.2}. Thus using $m_E(Z_n)$ instead of $\|P_n\|_{E^*}$ (or $\|P_n\|_E$) gives stronger results, in general. However, conditions \eqref{1.5} and \eqref{1.1} are substantially different, so that Theorem \ref{thm1.1} and Theorem BSS complement each other.

The following fact about the supremum norms of polynomials is of independent interest.

\begin{theorem} \label{thm1.2}
Let $E\subset\C$ be a regular compact set. Suppose that the sets $Z_n=\{z_{k,n}\}_{k=1}^n\subset\C,\ n\ge 2,$ satisfy \eqref{1.1}.
We have that
\begin{align} \label{1.7}
\lim_{n\to\infty} \|P_n\|_E^{1/n} = \ca(E)
\end{align}
for the polynomials $P_n(z)=\prod_{k=1}^n (z-z_{k,n})$ is equivalent to
\eqref{1.2} or \eqref{1.3}.
\end{theorem}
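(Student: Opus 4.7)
The plan is to establish the three-way equivalence by adding \eqref{1.7} to the chain $\eqref{1.2}\Leftrightarrow\eqref{1.3}$ that Theorem~\ref{thm1.1} already provides; thus it suffices to prove, under the standing assumption \eqref{1.1}, that \eqref{1.7} and \eqref{1.2} are equivalent.

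First I would handle the easy direction \eqref{1.7} $\Rightarrow$ \eqref{1.2}: this is essentially already done in the computation displayed after Theorem BSS. Since $E^*=\supp(\mu_E)\subset E$ one has $\|P_n\|_{E^*}\le\|P_n\|_E$, so that same chain of inequalities, with $\|P_n\|_E$ in place of $\|P_n\|_{E^*}$, yields
\[
0\le m_E(Z_n)\le \log\|P_n\|_E^{1/n}-\log\ca(E),
\]
whose right-hand side tends to $0$ by \eqref{1.7}.

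The substantive direction is \eqref{1.2} $\Rightarrow$ \eqref{1.7}. The lower bound $\liminf_n\|P_n\|_E^{1/n}\ge\ca(E)$ follows from the classical Chebyshev-type inequality for monic polynomials of degree $n$ (applied to $P_n$). For the matching upper bound I would pick $z_n^*\in E$ with $|P_n(z_n^*)|=\|P_n\|_E$ and use compactness of $E$ to extract a subsequence along which $\|P_n\|_E^{1/n}$ realizes its limsup and $z_n^*\to z^*\in E$. By Theorem~\ref{thm1.1}, \eqref{1.1} and \eqref{1.2} yield $\tau(Z_n)\stackrel{*}{\rightarrow}\mu_E$ together with the tail estimate \eqref{1.3}(ii). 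Splitting
\[
\frac{1}{n}\log|P_n(z_n^*)| = \int_{|t|\le R}\log|z_n^*-t|\,d\tau(Z_n)(t)+\int_{|t|>R}\log|z_n^*-t|\,d\tau(Z_n)(t),
\]
and taking $R$ so large that $E\subset\{|t|\le R/2\}$, the bound $|z_n^*-t|\le 2|t|$ on the tail lets \eqref{1.3}(ii) force the second integral to $o(1)$ as $R\to\infty$, uniformly in $n$. The principle of descent, applied to the restrictions of $\tau(Z_n)$ to $\{|t|\le R\}$ (which converge weakly to $\mu_E$ once $R$ exceeds the diameter of $E$), then gives
\[
\liminf_n\int_{|t|\le R}\log\frac{1}{|z_n^*-t|}\,d\tau(Z_n)(t)\ge \int\log\frac{1}{|z^*-t|}\,d\mu_E(t)=V_E=-\log\ca(E),
\]
where the last equality identifies the equilibrium potential at the concrete point $z^*\in E$ via the regularity of $E$. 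Combining the two estimates and letting $R\to\infty$ produces $\limsup_n n^{-1}\log|P_n(z_n^*)|\le\log\ca(E)$, as desired.

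The chief technical hurdle is that the supports of the $\tau(Z_n)$ need not lie in a common compact set, so the principle of descent cannot be invoked directly; the cutoff at $|t|=R$ together with \eqref{1.3}(ii) is precisely the device for bridging this gap. Regularity of $E$ is needed only to upgrade the identity $\int\log(1/|z-t|)\,d\mu_E(t)=V_E$ from quasi-every to every point of $E$, which matters at the particular limit point $z^*$.
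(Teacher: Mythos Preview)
Your proof is correct and follows essentially the same route as the paper: both directions of the argument match almost step for step (the easy direction via the displayed inequality after Theorem~BSS; the hard direction by splitting at radius $R$, controlling the tail via \eqref{1.3}(ii), applying the Principle of Descent to the truncated counting measures $\hat\tau_n$, and invoking regularity to evaluate $U^{\mu_E}$ at the limit point). One small wording issue: the tail is not $o(1)$ ``uniformly in $n$'' --- what \eqref{1.3}(ii) actually gives is that for each $\eps>0$ there is $R$ with $\limsup_{n}\int_{|t|>R}\log|z_n^*-t|\,d\tau(Z_n)(t)<\eps$; your final ``letting $R\to\infty$'' shows you are using this iterated-limit structure correctly, so the slip is purely terminological.
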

Regularity is understood here in the sense of the Dirichlet problem for $\Omega_E$, which means that the limiting boundary values of $g_E(z,\infty)$ in $\Omega_E$ are all zero, see \cite[p. 82]{Ts}. Regularity of $E$ also implies that $\ca(E)>0$. We recall that any monic polynomial $P_n$ of degree $n$ satisfies $\|P_n\|_E \ge (\ca(E))^n,$ see \cite[p. 16]{AB}. Thus \eqref{1.7} (and \eqref{1.4}) means that $P_n$ have asymptotically minimal supremum norms on $E$.

Our results have clear analogues in $\R^n,\ n>2,$ where one should use the theory of Newtonian potentials and the associated Green functions. Such extensions are also valid for the majority of quantitative estimates stated in the next section.

\section{Rate of convergence and discrepancy in equidistribution}

This section is devoted to the quantitative estimates of how close $\tau(Z_n)$ is to the equilibrium measure $\mu_E$, which are often called discrepancy estimates. Consider a class of continuous test functions $\phi:\R^2\to\R$ with compact support in the plane $\R^2=\C.$ Recall that $\tau(Z_n) \stackrel{*}{\rightarrow} \mu_E \mbox{ as }n\to\infty$ means
\[
\lim_{n\to\infty} \frac{1}{n} \sum_{k=1}^n \phi(z_{k,n}) = \lim_{n\to\infty} \int\phi\,d\tau(Z_n) = \int\phi\,d\mu_E.
\]
Let
\[
\omega_{\phi}(r):=\sup_{|z-\zeta|\le r} |\phi(z)-\phi(\zeta)|
\]
be the modulus of continuity of $\phi$ in $\C$. We also require that the functions $\phi$ have finite Dirichlet integral
\[
D[\phi]:= \iint_{\R^2} \left(\phi_x^2+\phi_y^2\right)\,dxdy,
\]
where it is assumed that the partial derivatives $\phi_x$ and $\phi_y$ exist a.e. on $\R^2$ in the sense of the area measure. Define the distance from a point $z\in\C$ to a compact set $E$ by
\[
d_E(z):=\min_{t\in E} |z-t|.
\]

\begin{theorem} \label{thm2.1}
Let $E\subset\C$ be an arbitrary compact set of positive capacity, and let $\phi:\C\to\R$ be a continuous function with compact support such that $D[\phi]<\infty.$ If $Z_n=\{z_{k,n}\}_{k=1}^n\subset\C,\ n\ge 2,$ then we have for any $r>0$ that
\begin{align} \label{2.1}
\left|\frac{1}{n} \sum_{k=1}^n \phi(z_{k,n}) - \int\phi\,d\mu_E\right| \le \omega_{\phi}(r) + \sqrt{\frac{D[\phi]}{2\pi}}\, \sqrt{I},
\end{align}
where
\begin{align} \label{2.2}
I = 2m_E(Z_n) + \frac{n-1}{n} \hat{I}[\tau(Z_n)] - V_E - \frac{\log{r}}{n} + 2\max_{d_E(z)\le 2r} g_E(z,\infty).
\end{align}
\end{theorem}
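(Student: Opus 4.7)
The strategy is to regularize the discrete measure $\tau(Z_n)$ and then apply a Dirichlet-integral Cauchy-Schwarz bound. I would replace each point mass $\delta_{z_{k,n}}$ by the uniform probability measure $\eta_{k,n}$ on the circle $\{z:|z-z_{k,n}|=r\}$, obtaining the smeared measure $\tilde\tau_n := \frac{1}{n}\sum_{k=1}^{n}\eta_{k,n}$. Since $\phi$ has modulus of continuity $\omega_\phi$,
\[
\left|\int \phi\, d\tau(Z_n) - \int \phi\, d\tilde\tau_n\right| \le \omega_\phi(r),
\]
so by the triangle inequality it suffices to bound $\left|\int \phi\, d(\tilde\tau_n - \mu_E)\right|$.

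For this I would invoke the classical potential-theoretic estimate
\[
\left|\int \phi\, d\sigma\right|^2 \le \frac{D[\phi]}{2\pi}\, I[\sigma],
\]
valid for any compactly supported signed measure $\sigma$ with $\sigma(\C) = 0$ and finite logarithmic energy. It follows from the distributional identity $-\Delta U^\sigma = 2\pi\sigma$ together with Green's formula $\int\phi\, d\sigma = \frac{1}{2\pi}\int\nabla\phi\cdot\nabla U^\sigma\, dA$, Cauchy-Schwarz, and the identity $\int|\nabla U^\sigma|^2\, dA = 2\pi I[\sigma]$. I would apply this with $\sigma = \tilde\tau_n - \mu_E$, which has total mass zero and finite energy.

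The bulk of the work is showing $I[\tilde\tau_n - \mu_E] \le I$. The explicit circle potential $U^{\eta_{k,n}}(z) = \log\frac{1}{\max(|z-z_{k,n}|, r)}$ yields $I[\eta_{k,n}, \eta_{k,n}] = \log(1/r)$; for $j\ne k$, superharmonicity of $\log\frac{1}{|z-z_{k,n}|}$ together with the mean-value inequality gives $I[\eta_{j,n}, \eta_{k,n}] \le \log\frac{1}{|z_{j,n}-z_{k,n}|}$. Summation then yields $I[\tilde\tau_n] \le -\frac{\log r}{n} + \frac{n-1}{n}\hat I[\tau(Z_n)]$. Writing $\tilde g_E := V_E - U^{\mu_E}$, a nonnegative subharmonic function that coincides with $g_E(\cdot,\infty)$ on $\Omega_E$, the identity $\int U^{\mu_E}\, d\tilde\tau_n = V_E - \int\tilde g_E\, d\tilde\tau_n$ reduces the cross term to an upper bound on $\int \tilde g_E\, d\eta_{k,n}$ for each $k$.

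The decisive dichotomy is as follows. If $z_{k,n}\in\Omega_E$ and $d_E(z_{k,n}) > r$, then by connectedness $\overline{B(z_{k,n}, r)}\subset\Omega_E$, so $\tilde g_E$ is harmonic on this disk and the mean value theorem gives $\int\tilde g_E\, d\eta_{k,n} = g_E(z_{k,n},\infty)$. Otherwise $d_E(z_{k,n}) \le r$, and every $z$ on the smearing circle satisfies $d_E(z)\le 2r$; the subharmonic maximum principle then yields $\int\tilde g_E\, d\eta_{k,n} \le \max_{d_E(z)\le 2r}g_E(z,\infty)$. Averaging over $k$ produces $\int\tilde g_E\, d\tilde\tau_n \le m_E(Z_n) + \max_{d_E(z)\le 2r}g_E(z,\infty)$, and assembling the pieces delivers $I[\tilde\tau_n - \mu_E] \le I$ as defined in \eqref{2.2}, completing the proof. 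I expect the main obstacle to lie in this case analysis: in the first case, verifying $\overline{B(z_{k,n}, r)}\subset\Omega_E$ so that the harmonic mean value applies to $g_E$ itself, and in the second, controlling $\tilde g_E$ uniformly by $g_E$ on $\{d_E\le 2r\}$ despite possible polar-set exceptions inside $E$.
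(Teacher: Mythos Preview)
Your approach is essentially the same as the paper's: circle-smearing of $\tau(Z_n)$, the modulus-of-continuity bound, the Cauchy--Schwarz/Dirichlet-integral estimate $|\int\phi\,d\sigma|^2\le \frac{D[\phi]}{2\pi}I[\sigma]$, the explicit circle potential $U^{\eta_{k,n}}(z)=-\log\max(r,|z-z_{k,n}|)$, and the same dichotomy on $d_E(z_{k,n})$ for the cross term. Two points deserve attention.

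First, your dichotomy is not exhaustive as written: the case $z_{k,n}\notin\Omega_E$ with $d_E(z_{k,n})>r$ (i.e.\ $z_{k,n}$ deep in a bounded component of $\C\setminus E$) is not ``otherwise $d_E(z_{k,n})\le r$''. The paper simply splits on $d_E(z_{k,n})\le r$ versus $d_E(z_{k,n})>r$; in the latter case the closed disk $\overline{B(z_{k,n},r)}$ lies in a single component of $\C\setminus E$, where $\tilde g_E$ is harmonic (either $\Omega_E$ or a bounded component on which $\tilde g_E\equiv 0$), so the mean-value identity gives $\int\tilde g_E\,d\eta_{k,n}=\tilde g_E(z_{k,n})$, and the sum is then dominated by $m_E(Z_n)$ because $\tilde g_E=0$ off $\Omega_E$. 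This is easy to fix and you essentially flagged it.

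Second, and more substantively, the paper does not invoke the ``classical estimate'' for general $E$ directly. It first proves \eqref{2.1}--\eqref{2.2} under the extra hypothesis that $\partial E$ consists of finitely many piecewise smooth curves (so $\mu_E$ is supported on a nice set and the Green's-identity/energy formula $D[U^\sigma]=2\pi I[\sigma]$ is unproblematic), and then runs a separate approximation argument: cover $E$ by a decreasing sequence $E_m$ of finite unions of closed disks, prove the estimate for each $E_m$, and pass to the limit using monotone convergence of $g_{E_m}\uparrow g_E$, $V_{E_m}\to V_E$, and $\mu_{E_m}\stackrel{*}{\to}\mu_E$ (via Carleson's unicity theorem). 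This step occupies roughly half the proof and is precisely what handles the ``polar-set exceptions'' and irregular boundary issues you worried about at the end. Your write-up should either include this approximation, or supply an independent justification that the Dirichlet--energy identity and the integration-by-parts formula hold for $\sigma=\tilde\tau_n-\mu_E$ when $E$ is an \emph{arbitrary} compact set of positive capacity and $\phi$ is merely continuous with $D[\phi]<\infty$.
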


The classical discrepancy results for the unit circle and the segment $[-1,1]$ are due to Erd\H{o}s and Tur\'an, cf. \cite{ET} and \cite{ET2}. They were developed by Ganelius \cite{Ga}, Amoroso and Mignotte \cite{AM}, and many others, see \cite{AB} for more history and references. Further generalizations and improvements are due to Blatt \cite{Bl}, Totik \cite{To2}, Blatt and Mhaskar \cite{BM}, Andrievskii and Blatt \cite{AB2}-\cite{AB3}, and others.  The ideas of applying energy estimates originated in part in the work of Kleiner \cite{Kl}, and were subsequently used by Sj\"{o}gren \cite{Sj1}-\cite{Sj2} and Huesing \cite{Hus}, see \cite[Ch. 5]{AB}. Favre and Rivera-Letelier \cite{FR} proved a result for the unit circle in number theoretic terms, related to Theorem \ref{thm2.1}. A predecessor of Theorem \ref{thm2.1} may be found in Theorem 4.3 of \cite{Pr4}, where we studied the asymptotic distribution of algebraic numbers, and answered some questions of Schur \cite{Sch}. Perhaps the most interesting new feature of Theorem \ref{thm2.1} is its generality. All previous discrepancy results imposed strict geometric conditions on the set $E$. A typical application of our result is given by a sequence of sets $Z_n$ satisfying \eqref{1.1} and \eqref{1.2}. We choose $r=r_n\to 0$ as $n\to\infty$, so that the right hand side of \eqref{2.1} tends to 0 with a certain rate under the mere assumption that the Green function $g_E(z,\infty)$ is continuous at the boundary points of $\Omega_E$ (i.e. $E$ is regular). For the effective estimates, one would usually take $r_n=c/n^a,$ with $a,c>0,$ and consider sets with uniformly H\"older continuous Green functions. We state the condition of H\"older continuity for $g_E(z,\infty)$ in the following form:
\begin{align} \label{2.3}
g_E(z,\infty) \le C(E) (d_E(z))^s, \quad z\in\Omega_E,
\end{align}
where $C(E)>0$ and $0<s\le 1.$ It holds for all uniformly perfect sets, various Cantor-type sets, and many other compact sets, see Carleson and Totik \cite[pp. 562--563]{CT} and Totik \cite{To} for the discussion and further references. Uniformly perfect sets form the widest known class given by a natural geometric condition, for which \eqref{2.3} is valid. A compact set $E$ is called uniformly perfect if there exist constants $c,d>0$ such that for any $z\in E$ and any $r\in(0,d)$ there is $\zeta\in E$ satisfying $cr<|z-\zeta|<r.$ Several interesting characterizations and many applications of uniformly perfect sets are discussed in the survey by Sugawa \cite{Su}, where the reader may also find history and numerous additional references. Uniformly perfect sets trivially include compact sets consisting of finitely many non-degenerate connected components.

We consider an application to the ``near-Fekete" points, i.e., to the sets $Z_n\subset E$ whose discrete energies are close to $V_E$.

\begin{theorem} \label{thm2.2}
Let $E\subset\C$ be a compact set, $\ca(E)>0,$ such that the H\"older condition \eqref{2.3} holds for $g_E(z,\infty)$. Suppose that  $Z_n=\{z_{k,n}\}_{k=1}^n\subset E,\ n\ge 2,$ satisfy
\begin{align} \label{2.4}
\hat{I}[\tau(Z_n)] - V_E \le C_1\, \frac{\log{n}}{n}, \quad n\ge 2,
\end{align}
where $C_1>0$ is independent of $Z_n$, and consider $\Omega_n:=\{z\in\Omega_E: g_E(z,\infty)>1/n\}$. Then we have for the polynomials $P_n(z):=\prod_{k=1}^n (z-z_{k,n})$ that
\begin{align} \label{2.5}
\left| \frac{1}{n}\log|P_n(z)|+ V_E - g_E(z,\infty) \right| \le C_2\,\frac{\log{n}}{\sqrt{n}}, \quad z\in\overline\Omega_n,\ n\ge 2,
\end{align}
where $C_2>0$ is independent of $z$ and $Z_n.$ Furthermore,
\begin{align} \label{2.6}
\log \|P_n\|_E + n V_E \le C_2\,\sqrt{n}\log{n} + 1, \quad n\ge 2,
\end{align}
and
\begin{align} \label{2.7}
- C_3 \, \frac{\log{n}}{\sqrt{n}} \le \hat{I}[\tau(Z_n)] - V_E, \quad n\ge 2,
\end{align}
with $C_3>0$ being independent of $Z_n$.
\end{theorem}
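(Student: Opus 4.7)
I would prove \eqref{2.5}, \eqref{2.6}, and \eqref{2.7} in cascade: \eqref{2.5} by applying Theorem \ref{thm2.1} to a carefully chosen test function, \eqref{2.6} via the maximum modulus principle from \eqref{2.5}, and \eqref{2.7} via a Markov-type estimate derived from \eqref{2.6} and Cauchy's formula. The key simplification throughout is $m_E(Z_n)=0$, which holds because $Z_n\subset E$.

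For \eqref{2.5}, I would fix $z\in\overline{\Omega}_n$, set $\rho:=d_E(z)/2$, and pick $R$ large enough that $E\subset B(z,R)$. Then I would apply Theorem \ref{thm2.1} to the truncated logarithm
\[
\phi_z(t) := \min\bigl(\max(\log|z-t|,\log\rho),\log R\bigr) - \log R,
\]
a continuous function of compact support that coincides with $\log|z-t|-\log R$ on $E$, whence $\int\phi_z\,d(\tau(Z_n)-\mu_E)=n^{-1}\log|P_n(z)|-(g_E(z,\infty)-V_E)$. A direct computation gives $D[\phi_z]=2\pi\log(R/\rho)$, and $\phi_z$ is $1/\rho$-Lipschitz, so $\omega_{\phi_z}(r)\le r/\rho$. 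The H\"older condition \eqref{2.3} forces $d_E(z)\ge c\,n^{-1/s}$ on $\overline{\Omega}_n$ (from $1/n\le g_E(z,\infty)\le C(E)\,d_E(z)^s$), so $D[\phi_z]=O(\log n)$. Using $m_E(Z_n)=0$, the hypothesis \eqref{2.4}, and \eqref{2.3} to bound $\max_{d_E\le 2r}g_E\le C(2r)^s$, the quantity $I$ in \eqref{2.2} satisfies $I=O\bigl(\log n/n+|\log r|/n+r^s\bigr)$. Choosing $r$ as a function of $\rho$ and $n$ to balance $r/\rho$ against $\sqrt{D[\phi_z]\,I/(2\pi)}$ at level $\log n/\sqrt n$ yields \eqref{2.5}.

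For \eqref{2.6}, the set $U_n:=\C\setminus\overline{\Omega}_n$ is bounded and contains $E$, with boundary inside $\{g_E(\cdot,\infty)=1/n\}$; on this boundary \eqref{2.5} gives $|P_n(z)|\le\exp(-nV_E+1+C_2\sqrt n\log n)$, and the maximum modulus principle transfers the bound to $E$, giving \eqref{2.6}. For \eqref{2.7}, I would set $\delta:=(C(E)n)^{-1/s}$; for $|w-z_{k,n}|=\delta$ the triangle inequality gives $d_E(w)\le\delta$, so by \eqref{2.3} $g_E(w,\infty)\le 1/n$ and Bernstein-Walsh gives $|P_n(w)|\le e\|P_n\|_E$. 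Cauchy's formula then produces $|P_n'(z_{k,n})|\le e\|P_n\|_E/\delta\le C\,n^{1/s}\|P_n\|_E$. Combining with the Vandermonde identity $|V(Z_n)|^2=\prod_k|P_n'(z_{k,n})|$ and \eqref{2.6},
\[
\hat I[\tau(Z_n)] = -\frac{\log\prod_k|P_n'(z_{k,n})|}{n(n-1)} \ge -\frac{n\log C+(n/s)\log n+n\log\|P_n\|_E}{n(n-1)} \ge V_E - C_3\,\frac{\log n}{\sqrt n}.
\]

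The delicate step is \eqref{2.5}: the cut-off parameter $r$ must be tuned to $\rho$ (and hence to the distance of $z$ from $E$) so that the Lipschitz term $r/\rho$ and the discrete-energy term $\sqrt{D[\phi_z]\,I/(2\pi)}$ emerge at the stated rate uniformly for $z\in\overline{\Omega}_n$; the critical regime is $z$ close to $\partial E$, where $\rho\sim n^{-1/s}$ is smallest and the two contributions are most in tension.
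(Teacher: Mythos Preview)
Your proof follows essentially the same route as the paper's: apply Theorem~\ref{thm2.1} with a smoothed logarithmic kernel to obtain \eqref{2.5}, then cascade to \eqref{2.6} via the maximum principle and to \eqref{2.7} via Cauchy's estimate for $P_n'(z_{k,n})$. The paper's only variations are a preliminary reduction of \eqref{2.5} to $z\in\Gamma_n:=\partial\Omega_n$ (using harmonicity of $\tfrac1n\log|P_n|+V_E-g_E$ in $\Omega_E$, with value $0$ at $\infty$), a slightly different test function $\phi(t)=\min\bigl(\log(|z-t|+d_E(t))-\log R,\,0\bigr)$ in place of your radially truncated logarithm, and the explicit choice $r=n^{-1/2-1/s}$.
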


One of the standard applications for arrays of equidistributed points is to interpolation of analytic functions. Thus \eqref{2.5} and \eqref{2.6} are used in the effective estimates of convergence rates for Lagrange interpolation polynomials via Hermite interpolation formula, see Gaier \cite[Ch. 2]{Gai} and Walsh \cite[Ch. 4]{Wa}. The Fekete points $\cF_n=\{\zeta_{k,n}\}_{k=1}^n$ represent very convenient nodes for interpolation, and one can easily verify that Theorem \ref{thm2.2} applies in this case. However, they are difficult to find explicitly and even numerically, as all points of $\cF_n$ change with $n$. Another choice of interpolation nodes frequently used in practice is given by Leja points. They hold advantage of being defined as a sequence. If $E\subset\C$ is a compact set of positive capacity, then the Leja points $\{\xi_k\}_{k=0}^{\infty}$ are defined recursively in the following way.  We choose $\xi_0\in E$ as an arbitrary point. When $\{\xi_k\}_{k=0}^n$ are selected, we choose the next point $\xi_{n+1}\in E$ as a point satisfying
\[
\prod_{k=0}^{n} |\xi_{n+1}-\xi_k| = \max_{z\in E} \prod_{k=0}^{n} |z-\xi_k|.
\]
It is known that Leja points are equidistributed in $E$, cf. \cite{BBCL}. However, the properties of Leja points are not studied as well as those of Fekete points. Thus Theorem \ref{thm2.2} provides new information about Leja points and corresponding polynomials for quite general sets.

\begin{corollary} \label{cor2.3}
Theorem \ref{thm2.2} holds for Fekete and Leja points.
\end{corollary}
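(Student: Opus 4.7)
The plan is to verify the hypothesis \eqref{2.4} of Theorem \ref{thm2.2} for each of the two arrays; since Fekete and Leja points sit inside $E$ by construction, no other condition in Theorem \ref{thm2.2} requires checking, and conclusions \eqref{2.5}--\eqref{2.7} then follow at once. In fact, in both cases I expect the strictly stronger bound
\[
\hat{I}[\tau(Z_n)] \le V_E,
\]
so that \eqref{2.4} is satisfied with $C_1=0$.

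For Fekete points $\cF_n$, this is essentially built into the definitions: by construction $|V(\cF_n)|^{2/(n(n-1))}=\delta_n(E)$, and Fekete's classical monotonicity asserts $\delta_n(E)\ge \delta(E)=\ca(E)$. Taking $-\log$ of both sides gives $\hat{I}[\tau(\cF_n)]\le -\log\ca(E)=V_E$, so Theorem \ref{thm2.2} applies.

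For the first $n$ Leja points $L_n:=\{\xi_k\}_{k=0}^{n-1}$, I would factor the Vandermonde row-by-row using the defining max property. Writing $p_k(z):=\prod_{j=0}^{k-1}(z-\xi_j)$, the Leja selection rule gives $\prod_{j=0}^{k-1}|\xi_k-\xi_j|=\|p_k\|_E$, and hence
\[
|V(L_n)| = \prod_{k=1}^{n-1}\prod_{j=0}^{k-1}|\xi_k-\xi_j| = \prod_{k=1}^{n-1}\|p_k\|_E.
\]
Each $p_k$ is monic of degree $k$, so the Chebyshev-type lower bound $\|p_k\|_E\ge \ca(E)^k$ recalled just after Theorem \ref{thm1.2} produces $|V(L_n)|\ge \ca(E)^{n(n-1)/2}$, which after taking logs is precisely $\hat{I}[\tau(L_n)]\le V_E$. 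Thus \eqref{2.4} holds, and Theorem \ref{thm2.2} delivers \eqref{2.5}--\eqref{2.7} for Leja points as well.

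I anticipate no genuine obstacle here: the Fekete case is a direct restatement of the monotonicity of $\delta_n(E)$, and the Leja case reduces to a telescoping identity combined with the standard monic-polynomial bound. The only mildly subtle point is to index the Leja points so that the new point $\xi_k$ enters exactly as the argument at which $p_k$ attains its supremum on $E$; that indexing is what collapses the row-wise product into $\prod_k\|p_k\|_E$ and yields the target inequality cleanly.
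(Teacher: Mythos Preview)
Your proposal is correct and follows essentially the same route as the paper: for Fekete points you both invoke the monotonicity of $\delta_n(E)$ (equivalently, the increase of the discrete energies to $V_E$) to get $\hat I[\tau(\cF_n)]\le V_E$, and for Leja points you both factor $|V(L_n)|=\prod_{k=1}^{n-1}\|p_k\|_E$ via the Leja selection rule and then apply the monic-polynomial bound $\|p_k\|_E\ge(\ca(E))^k$ to obtain $\hat I[\tau(L_n)]\le V_E$. Nothing further is needed, since $Z_n\subset E$ in both cases.
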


Surveys of results on Fekete points and Fekete polynomials may be found in Korevaar \cite{Ko}, Andrievskii and Blatt \cite{AB} and Korevaar and Monterie \cite{KM}. We note that the estimates of Theorem \ref{thm2.2} can be improved for the Fekete points and Fekete polynomials of a set $E$ satisfying more restrictive smoothness conditions. Results on Leja points and interpolation may be found in Bloom, Bos, Christensen and Levenberg \cite{BBCL}, while G\"otz \cite{Go} considered questions of discrepancy in their distribution.

We now state a consequence of Theorem \ref{thm2.1} for the Lipschitz continuous functions $\phi.$
\begin{theorem} \label{thm2.4}
Let $E\subset\C$ be a compact set, $\ca(E)>0,$ with Green function satisfying the H\"older condition \eqref{2.3}. Suppose that $\phi:\C\to\R$ is a Lipschitz continuous function with compact support. If $Z_n=\{z_{k,n}\}_{k=1}^n\subset\C,\ n\ge 2,$ satisfy \eqref{2.4}, then
\begin{align} \label{2.8}
\left|\frac{1}{n}\sum_{k=1}^n \phi(z_{k,n}) - \int\phi\,d\mu_E\right| \le C_4 \sqrt{\max\left(\frac{\log{n}}{n},m_E(Z_n)\right)}, \quad n\ge 2,
\end{align}
where $C_4>0$ does not depend on $Z_n$.
\end{theorem}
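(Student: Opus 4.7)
The plan is to apply Theorem~\ref{thm2.1} with a carefully chosen scale $r_n$ and then use hypotheses \eqref{2.3}--\eqref{2.4} to control each term of $I$ in \eqref{2.2}.

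Since $\phi$ is Lipschitz with constant $L$, we have $\omega_\phi(r) \le Lr$, and the compact support together with the a.e.\ bounds $|\phi_x|, |\phi_y| \le L$ give $D[\phi] \le 2L^2 \cdot \textup{Area}(\supp\phi) < \infty$; thus the hypotheses of Theorem~\ref{thm2.1} are satisfied. Write $M_n := \max(\log n /n,\, m_E(Z_n))$ and take $r_n := M_n^{1/s}$, where $s \in (0,1]$ is the Hölder exponent in \eqref{2.3}. In the regime $M_n \ge 1$, the left-hand side of \eqref{2.8} is trivially bounded by $2\|\phi\|_\infty \le 2\|\phi\|_\infty \sqrt{M_n}$, so I focus on the case $M_n < 1$ (hence $r_n < 1$).

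With this choice of $r_n$, I would bound the four summands of $I$ as follows. First, $2m_E(Z_n) \le 2M_n$ by the definition of $M_n$. Second, writing $\frac{n-1}{n}\hat I[\tau(Z_n)] - V_E = \frac{n-1}{n}(\hat I[\tau(Z_n)] - V_E) - V_E/n$ and invoking \eqref{2.4} shows this is at most $(C_1 + |V_E|)\log n/n = O(M_n)$. Third, because $M_n \ge \log n/n$, one has $-\log M_n \le \log(n/\log n) \le \log n$, so $-\log r_n/n = -(1/s)\log M_n/n \le M_n/s$. Fourth, the Hölder hypothesis \eqref{2.3} yields
\[
2 \max_{d_E(z) \le 2r_n} g_E(z,\infty) \le 2C(E)(2r_n)^s = 2^{s+1}C(E)\, M_n.
\]
Summing gives $I \le C' M_n$ for a constant $C'$ depending only on $s, C_1, V_E, C(E)$.

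Substituting into \eqref{2.1} and using $LM_n^{1/s} \le LM_n \le L\sqrt{M_n}$ (which holds because $0<s\le 1$ and $M_n \le 1$), the right-hand side of \eqref{2.1} is at most
\[
L\sqrt{M_n} + \sqrt{\frac{D[\phi]}{2\pi}}\,\sqrt{C' M_n} \le C_4 \sqrt{M_n},
\]
yielding \eqref{2.8}. The only real obstacle is the balancing act behind the choice $r_n = M_n^{1/s}$: the Hölder term $(2r)^s$ demands $r \le M_n^{1/s}$, while $-\log r/n$ forbids $r$ from being much smaller, and the Lipschitz contribution $Lr$ must remain $O(\sqrt{M_n})$. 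The scale $M_n^{1/s}$ threads all three constraints and makes each of the nontrivial contributions to $I$ of order $M_n$, matching the trivially bounded term $2m_E(Z_n)$.
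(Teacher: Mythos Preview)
Your proof is correct and follows essentially the same strategy as the paper: apply Theorem~\ref{thm2.1} with a small scale $r$, use the Lipschitz condition to bound $\omega_\phi(r)$ and $D[\phi]$, and use \eqref{2.3}--\eqref{2.4} to show $I=O(M_n)$. The only difference is cosmetic: the paper takes $r=n^{-2/s}$ (independent of $m_E(Z_n)$) and bounds the Green-function term via $2r\le\rho_n\Rightarrow g_E\le 1/n$, whereas you take the adaptive scale $r=M_n^{1/s}$ and invoke \eqref{2.3} directly; both choices balance the terms of $I$ to order $M_n$.
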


As an application, we give an estimate of how close are the complex moments of the discrete measures $\tau(Z_n)$ to the moments of $\mu_E.$ Similar result for the real moments $\int |z|^m \,d\mu(z)$ may also be easily deduced from Theorem \ref{thm2.4}.

\begin{corollary} \label{cor2.5}
Let $E\subset\C$ be a compact set, $\ca(E)>0,$ with Green function satisfying \eqref{2.3}. If $Z_n=\{z_{k,n}\}_{k=1}^n\subset E,\ n\ge 2,$ satisfy \eqref{2.4}, then for each $m\in\N$ we have
\begin{align} \label{2.9}
\left|\frac{1}{n}\sum_{k=1}^n z_{k,n}^m - \int z^m\,d\mu_E(z)\right| \le C_5 \sqrt{\frac{\log{n}}{n}}, \quad n\ge 2,
\end{align}
where $C_5>0$ does not depend on $Z_n$ (but depends on $m$ and $E$).
\end{corollary}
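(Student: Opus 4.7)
The plan is to reduce to Theorem \ref{thm2.4} via a standard cutoff construction. The monomial $z^m$ is not compactly supported, so I would first pick $R>0$ with $E\subset\{|z|\le R\}$ and a Lipschitz cutoff $\chi:\C\to[0,1]$ equal to $1$ on $\{|z|\le R\}$ and vanishing outside $\{|z|\le 2R\}$. Set $\phi(z):=z^m\chi(z)$; this is a (complex-valued) Lipschitz function with compact support, whose Lipschitz constant depends only on $m$ and $E$.

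Next I would use that $Z_n\subset E$ and $\supp\mu_E\subset E$ to replace the monomial by $\phi$ without error:
\[
\frac{1}{n}\sum_{k=1}^n z_{k,n}^m = \frac{1}{n}\sum_{k=1}^n \phi(z_{k,n}),\qquad \int z^m\,d\mu_E(z)=\int\phi\,d\mu_E.
\]
The inclusion $Z_n\subset E$ also yields $m_E(Z_n)=0$ for every $n$, since no point of $Z_n$ lies in $\Omega_E$. Thus the bound \eqref{2.8} applied to the real and imaginary parts of $\phi$ (each is real-valued, Lipschitz, with compact support) reduces to $C_4\sqrt{\log n/n}$, with the constant $C_4$ determined by $m$, $E$, and the hypothesis \eqref{2.4} via Theorem \ref{thm2.4}.

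Finally I would apply Theorem \ref{thm2.4} to $\textup{Re}\,\phi$ and $\textup{Im}\,\phi$ separately and combine by the triangle inequality $|a+ib|\le|a|+|b|$, obtaining \eqref{2.9} with $C_5=2C_4$ (depending on $m$ and $E$ as indicated). There is no real obstacle here: the only non-mechanical step is verifying the H\"older/Dirichlet hypotheses used to invoke Theorem \ref{thm2.4}, all of which are either assumed in the Corollary or automatic from the truncation.
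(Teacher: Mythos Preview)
Your argument is correct and follows essentially the same route as the paper: apply Theorem \ref{thm2.4} to the real and imaginary parts of a compactly supported Lipschitz extension of $z^m$, using $m_E(Z_n)=0$ because $Z_n\subset E$. The only cosmetic difference is that the paper phrases this as extending $\Re(z^m)$ and $\Im(z^m)$ from $E$ to Lipschitz functions with compact support, whereas you implement the extension explicitly via a cutoff; these are equivalent.
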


Several applications of this kind to Schur's problems on means of algebraic numbers \cite{Sch} were given in \cite{Pr3} and \cite{Pr4}. We would like to highlight an interesting fact that Schur's paper \cite{Sch} prompted Fekete to introduce his transfinite diameter in \cite{Fe}. While the work of Fekete \cite{Fe} is well known in analysis, and is clearly considered a cornerstone of the area dealt with in this paper, the fundamental nature of Schur's work \cite{Sch} has become somewhat obscured with time. In fact, Schur's ideas contained in \cite{Sch} started several important areas of research in analysis and number theory.

We state below a generalization of Theorems 3.1 and 3.4 from \cite{Pr4} to much more general sets.

\begin{theorem} \label{thm2.6}
Let $E\subset\C$ be a compact set, $\ca(E)=1,$ with Green function satisfying \eqref{2.3}. Suppose that $\phi:\C\to\R$ is a Lipschitz continuous function with compact support. If $P_n(z) = a_n\prod_{k=1}^n (z-z_{k,n}),\ a_n\neq 0,$ is a polynomial with integer coefficients and simple zeros $Z_n=\{z_{k,n}\}_{k=1}^n\subset\C,\ n\ge 2,$ then
\begin{align} \label{2.10}
\left|\frac{1}{n}\sum_{k=1}^n \phi(z_{k,n}) - \int\phi\,d\mu_E\right| \le C_6 \sqrt{\max\left(\frac{\log(n|a_n|)}{n},m_E(Z_n)\right)}, \quad n\ge 2,
\end{align}
where $C_6>0$ does not depend on $P_n$.
\end{theorem}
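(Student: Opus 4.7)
The plan is to reduce the statement to Theorem \ref{thm2.1} along the same path that leads to Theorem \ref{thm2.4}, using an energy upper bound derived from the integer coefficients of $P_n$ in place of the hypothesis \eqref{2.4}.

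The key new ingredient is the classical discriminant estimate. Since $P_n$ has integer coefficients and simple zeros, its discriminant
\[
\mathrm{disc}(P_n) = (-1)^{n(n-1)/2}\, a_n^{2n-2}\, V(Z_n)^2
\]
is a nonzero integer, so $|a_n|^{2n-2}|V(Z_n)|^2 \ge 1$. Extracting roots yields $|V(Z_n)|^{2/(n(n-1))} \ge |a_n|^{-2/n}$, hence
\[
\hat{I}[\tau(Z_n)] \le \frac{2\log|a_n|}{n}.
\]
Because $\ca(E)=1$ forces $V_E=0$, this inequality plays exactly the role that hypothesis \eqref{2.4} plays in the proof of Theorem \ref{thm2.4}, with the quantity $C_1\log n$ replaced by $2\log|a_n|$.

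I then apply Theorem \ref{thm2.1} with the choice $r=r_n:=n^{-1/s}$, where $s$ is the Hölder exponent from \eqref{2.3}; note that $D[\phi]<\infty$ since $\phi$ is Lipschitz with compact support. The Hölder condition gives $\max_{d_E(z)\le 2r_n} g_E(z,\infty) \le 2^s C(E)/n$, while $-\log(r_n)/n = (\log n)/(sn)$. Substituting these together with the discriminant bound into formula \eqref{2.2} and absorbing all constants, one obtains
\[
I \le 2m_E(Z_n) + \frac{C'\log(n|a_n|)}{n},
\]
with $C'$ depending only on $s$ and $C(E)$. The elementary inequality $\sqrt{a+b}\le\sqrt{2}\sqrt{\max(a,b)}$ then yields $\sqrt{I} \le C''\sqrt{\max(m_E(Z_n),\,\log(n|a_n|)/n)}$. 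Since $s\le 1$, the Lipschitz estimate $\omega_\phi(r_n)\le L\,n^{-1/s}$ is dominated by $\sqrt{(\log n)/n}$, and hence by the same right-hand side; inserting both estimates into \eqref{2.1} produces \eqref{2.10}.

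The main substantive step is the discriminant bound, which is the standard bridge from the arithmetic hypothesis ``integer coefficients'' to the potential-theoretic quantity $\hat{I}[\tau(Z_n)]$. The remaining work is bookkeeping: choosing $r_n$ so that every term in $I$ other than $2m_E(Z_n)$ is of order $\log(n|a_n|)/n$, checking that the power $r_n=n^{-1/s}$ makes $\omega_\phi(r_n)$ negligible relative to $\sqrt{\log(n|a_n|)/n}$ under the constraint $s\le 1$, and verifying that all resulting constants depend only on $E$, $\phi$, and $s$, rather than on the specific polynomial $P_n$.
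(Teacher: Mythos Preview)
Your proof is correct and follows essentially the same route as the paper: derive the energy bound $\hat I[\tau(Z_n)]\le \frac{2}{n}\log|a_n|$ from the integrality of the discriminant, then feed this into Theorem~\ref{thm2.1} with the Lipschitz estimates for $\phi$ exactly as in the proof of Theorem~\ref{thm2.4}. The only cosmetic differences are that the paper takes $r=n^{-2/s}$ and bounds $\max_{d_E(z)\le 2r} g_E(z,\infty)$ via the level set $\Gamma_n$ rather than directly from~\eqref{2.3}; your choice $r=n^{-1/s}$ with the direct H\"older estimate works equally well.
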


Let $\Z_n^s(E,M)$ be a class of polynomials $P_n(z)=a_nz^n+\ldots$ with integer coefficients and simple zeros in a set $E\subset\C$, satisfying $0<|a_n|\le M$ for a fixed number $M>0.$ Schur \cite[\S 4-8]{Sch} studied the limit behavior of the arithmetic means $A_n$ of zeros for polynomials from $\Z_n^s(E,M)$ as $n\to\infty.$ For $E=D$ the closed unit disk, Schur proved that
\begin{align*}
\limsup_{n\to\infty} |A_n| \le 1-\sqrt{e}/2 < 0.1757.
\end{align*}
We showed \cite{Pr3} that $\lim_{n\to\infty} A_n = 0$ for any sequence of polynomials from Schur's classes $\Z_n^s(D,M),\ n\in\N,$ as a consequence of the asymptotic equidistribution of zeros near the unit circle. We also gave estimates of the convergence rates for $A_n.$ The following result generalizes Corollary 1.6 from \cite{Pr3}, as well as Corollary 3.5 from \cite{Pr4}, to centrally symmetric compact sets of capacity 1.

\begin{corollary} \label{cor2.7}
Let $E\subset\C$ be a compact set, $\ca(E)=1,$ symmetric with respect to the origin, with $g_E(z,\infty)$ satisfying \eqref{2.3}. If $P_n(z) = a_n\prod_{k=1}^n (z-z_{k,n})\in\Z_n^s(E,M)$ then
\begin{align} \label{2.11}
\left|\frac{1}{n}\sum_{k=1}^n z_{k,n}\right| \le C_7 \sqrt{\frac{\log{n}}{n}},\quad n\ge \max(M,2).
\end{align}
where $C_7>0$ does not depend on $P_n$.
\end{corollary}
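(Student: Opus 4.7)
The plan is to apply Theorem~\ref{thm2.6} to the two coordinate functions $\textup{Re}(z)$ and $\textup{Im}(z)$, exploit the central symmetry of $E$ to kill the equilibrium-measure integrals, and then combine the resulting bounds. Writing $\frac{1}{n}\sum z_{k,n} = \frac{1}{n}\sum \textup{Re}(z_{k,n}) + i\frac{1}{n}\sum \textup{Im}(z_{k,n})$, it suffices to estimate each real component separately by $C\sqrt{\log n/n}$.

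First I would construct two Lipschitz, compactly supported functions $\phi_1,\phi_2:\C\to\R$ that coincide with $\textup{Re}(z)$ and $\textup{Im}(z)$ on $E$. This is done by enclosing $E$ in a large disk $D_R$, taking a Lipschitz cutoff $\chi$ that equals $1$ on $D_R$ and vanishes outside $D_{2R}$, and setting $\phi_j := \chi \cdot (\textup{Re}\text{ or }\textup{Im})$. Next I would observe that the hypothesis $E = -E$ makes the logarithmic energy functional $I[\mu]$ invariant under the push-forward by $z\mapsto -z$; by uniqueness of the minimizer of $I$ on $\cM(E)$, the equilibrium measure $\mu_E$ is itself invariant under $z\mapsto -z$, so $\int z\, d\mu_E = 0$ and hence $\int \phi_1\, d\mu_E = \int \phi_2\, d\mu_E = 0$.

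Finally, since $Z_n \subset E$ we have $m_E(Z_n) = 0$, so Theorem~\ref{thm2.6} applied to each $\phi_j$ yields
\[
\left|\frac{1}{n}\sum_{k=1}^n \phi_j(z_{k,n})\right| \le C_6\sqrt{\frac{\log(n|a_n|)}{n}}, \quad n\ge 2.
\]
When $n \ge \max(M,2)$ the bound $|a_n|\le M \le n$ gives $\log(n|a_n|) \le 2\log n$, and combining the $j=1,2$ estimates via the triangle inequality in $\C$ produces \eqref{2.11} with $C_7 = C_6\sqrt{2\cdot 2} = 2C_6$. The only place where something genuinely has to be \emph{checked} rather than quoted is the symmetry of $\mu_E$; the rest is a routine application of Theorem~\ref{thm2.6}, and I anticipate no real obstacle.
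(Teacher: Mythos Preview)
Your proposal is correct and follows essentially the same strategy as the paper: extend a coordinate function to a compactly supported Lipschitz $\phi$, invoke Theorem~\ref{thm2.6} with $m_E(Z_n)=0$, and use the central symmetry of $E$ (via uniqueness of $\mu_E$) to make $\int z\,d\mu_E$ vanish.

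The one difference is that the paper avoids treating the imaginary part separately. Since $P_n$ has \emph{integer} (hence real) coefficients, the zeros $z_{k,n}$ come in complex conjugate pairs, so $\frac{1}{n}\sum_{k} z_{k,n} = \frac{1}{n}\sum_{k} \textup{Re}(z_{k,n})$ automatically and a single application of Theorem~\ref{thm2.6} with $\phi = \textup{Re}(z)$ suffices. Your two-component argument is perfectly valid and in fact would work even without the real-coefficient hypothesis, but the paper's shortcut saves a factor of $\sqrt{2}$ in the constant and one invocation of the theorem.
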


Note that \eqref{2.5}, \eqref{2.6}, \eqref{2.9} and \eqref{2.11} are sharp up to certain logarithmic factors, even for polynomials with integer coefficients and $E=D$ the closed unit disk.

\begin{example} \label{ex2.8}
Let $p_m$ be the $m$th prime number in the increasing ordering of primes. Define the monic polynomials
\[
P_n(z):=\prod_{m=1}^{k} \frac{z^{p_m}-1}{z-1}, \quad k\in\N,
\]
and note that each $P_n$ has simple zeros $Z_n=\{z_{j,n}\}_{j=1}^n$ at the roots of unity, and integer coefficients. Hence the discriminant $\Delta(P_n) =  (V(Z_n))^2$ is a non-zero integer, see \cite[p. 24]{Pra}. We conclude that $|\Delta(P_n)|\ge 1$ and $\hat{I}[\tau(Z_n)] = - \log |\Delta(P_n)|^{\frac{1}{n(n-1)}} \le 0 = V_D,$ so that \eqref{2.4} is satisfied. Using number theoretic arguments, we show in the proof that the degree of $P_n$ is
\begin{align*}
 n = \sum_{m=1}^k p_m - k = \frac{k^2 \log k}{2} + o(k^2 \log k)\quad \mbox{as } k\to\infty,
\end{align*}
and that
\begin{align*}
\|P_n\|_D = P_n(1) = \prod_{m=1}^k p_m \ge e^{c_1 \sqrt{n\log n}},\quad n\ge 2,
\end{align*}
with a constant $c_1>0.$ Therefore, the upper bound in \eqref{2.6} is of correct order of magnitude up to the factor $\sqrt{\log n}$. The same conclusion is true for \eqref{2.5} by the Maximum Principle. Furthermore, since the sum of roots of each $(z^{p_m}-1)/(z-1)$ is equal to $-1,$ we obtain for the roots of $P_n$ that
\[
\left|\frac{1}{n}\sum_{j=1}^n z_{j,n}\right| = \frac{k}{n} \ge \frac{c_2}{\sqrt{n\log{n}}},
\]
where $c_2>0.$ Hence \eqref{2.9} and \eqref{2.11} are sharp up to the factor $\log{n}.$
\end{example}

In conclusion, we mention that Schur \cite{Sch} also considered the limit behavior of the arithmetic means of zeros for polynomials from $\Z_n^s(E,M)$ when $E=(0,\infty)$ and $E=\R.$ The case $E=(0,\infty)$ was developed by Siegel \cite{Si} and others, see \cite{Pr4} for history, more references, and new results.

\section{Proofs}

We give a brief review of basic facts from potential theory. A complete account may be found in the books by Ransford \cite{Ra}, Tsuji \cite{Ts}, and Landkof \cite{La}. For a Borel measure $\mu$ with compact support, define its potential \cite[p. 53]{Ts} by
\[
U^{\mu}(z):=\int \log\frac{1}{|z-t|}\,d\mu(t), \quad z\in\C.
\]
It is known that $U^{\mu}(z)$ is a superharmonic function in $\C$, which is harmonic outside $\supp(\mu)$. If $U^{\mu_E}(z)$ is the equilibrium (conductor) potential for $E$, then Frostman's theorem \cite[p. 60]{Ts} gives that
\begin{align} \label{5.1}
U^{\mu_E}(z) \le V_E,\ z\in\C, \quad \mbox{and} \quad U^{\mu_E}(z) = V_E \mbox{ q.e. on }E.
\end{align}
The second statement means that equality holds quasi everywhere on $E$, i.e., except for a subset of zero capacity in $E$. This may be made even more precise, as $U^{\mu_E}(z) = V_E$ for any $z\in\overline\C\setminus \overline\Omega_E.$ Hence $U^{\mu_E}(z) = V_E$ for any $z$ in the interior of $E$ \cite[p. 61]{Ts}. Furthermore, $U^{\mu_E}(z) = V_E$ for $z\in\partial\Omega_E$ if and only if $z$ is a regular point for the Dirichlet problem in $\Omega_E$ \cite[p. 82]{Ts}. We mention a well known connection of the equilibrium potential for $E$ with the Green function $g_E(z,\infty)$ for $\Omega_E$ with pole at $\infty$:
\begin{align} \label{5.2}
g_E(z,\infty) = V_E - U^{\mu_E}(z),\quad z\in\C.
\end{align}
This gives a standard extension of $g_E(z,\infty)$ from $\Omega_E$ to the whole plane $\C,$ see \cite[p. 82]{Ts}. Thus $g_E(z,\infty)=0$ for quasi every $z\in\partial\Omega_E$, and $g_E(z,\infty)=0$ for any $z\in\overline\C\setminus\overline\Omega_E,$ by \eqref{5.1} and \eqref{5.2}.

\begin{proof}[Proof of Theorem \ref{thm1.1}]
Set $\tau_n:=\tau(Z_n)$ for brevity. We first prove that \eqref{1.1} and \eqref{1.2} imply \eqref{1.3}. Observe that each closed set $K\subset\Omega_E$ contains $o(n)$ points of $Z_n$ as $n\to\infty,$ i.e.
\begin{align} \label{5.3}
\lim_{n\to\infty} \tau_n(K) = 0.
\end{align}
This fact follows because $\min_{z\in K} g_E(z,\infty) > 0$ and
\[
0 \le \tau_n(K) \min_{z\in K} g_E(z,\infty) \le  \frac{1}{n} \sum_{z_{k,n}\in K} g_E(z_{k,n},\infty) \le m_E(Z_n) \to 0 \quad\mbox{as } n\to\infty.
\]
Thus if $R>0$ is sufficiently large, so that $E\subset D_R:=\{z:|z|<R\},$ we have $o(n)$ points of $Z_n$ in $\C\setminus D_R.$ Another consequence of the above inequalities is that
\[
\lim_{n\to\infty} \frac{1}{n} \sum_{|z_{k,n}|\ge R} g_E(z_{k,n},\infty) = 0.
\]
Recall that $\lim_{z\to\infty} (g_E(z,\infty)-\log|z|) = V_E,$ see \cite[p. 83]{Ts}. It follows that for any $\eps>0$, there is a sufficiently large $R>0$ such that $V_E-\eps<g_E(z,\infty)-\log|z|<V_E+\eps$ for $|z|\ge R$, and
\[
\frac{o(n)}{n}(V_E-\eps) \le \frac{1}{n} \sum_{|z_{k,n}|\ge R} g_E(z_{k,n},\infty) - \frac{1}{n} \sum_{|z_{k,n}|\ge R} \log|z_{k,n}| \le \frac{o(n)}{n}(V_E+\eps).
\]
Therefore, \eqref{1.3}({\it ii}) is proved by passing to the limit as $n\to\infty$.

Consider
\[
\hat\tau_n := \frac{1}{n} \sum_{|z_{k,n}|<R} \delta_{z_{k,n}}.
\]
Since $\supp(\hat\tau_n)\subset D_R,\ n\in\N,$ we use Helly's theorem \cite[p. 3]{ST} to select a weak-* convergent subsequence from the sequence $\hat\tau_n$. Preserving the same notation for this subsequence, we assume that $\hat\tau_n \stackrel{*}{\rightarrow} \tau$ as $n\to\infty$. It is clear from \eqref{5.3} that $\tau_n \stackrel{*}{\rightarrow} \tau$  as $n\to\infty$, and that $\tau$ is a probability measure supported on the compact set $\hat E := \overline{\C}\setminus\Omega_E.$ Suppose that $R>0$ is large, and order $z_{k,n}$ as follows
\[
|z_{1,n}| \le |z_{2,n}| \le \ldots \le |z_{m_n,n}| < R \le |z_{m_n+1,n}| \le \ldots \le |z_{n,n}|.
\]
Then
\begin{align} \label{5.4}
\hat{I}[\tau_n]  &= \hat{I}[\hat \tau_n] - \frac{2}{n(n-1)} \sum_{1\le j<k \atop m_n<k\le n} \log|z_{j,n}-z_{k,n}| \ge \hat{I}[\hat \tau_n] - \frac{2}{n} \sum_{k=m_n+1}^n \log(2|z_{k,n}|) \\ \nonumber &= \hat{I}[\hat \tau_n] - \frac{2(n-m_n)}{n} \log{2} - \frac{2}{n} \sum_{k=m_n+1}^n \log|z_{k,n}|,
\end{align}
where we used that $|z_{j,n}-z_{k,n}| \le 2\max(|z_{j,n}|,|z_{k,n}|)=2|z_{k,n}|$ for $j<k.$ Note that $\lim_{n\to\infty} m_n/n = 1$ by \eqref{5.3}. For any $\eps>0$, we find $R>0$ such that
\[
\limsup_{n\to\infty} \frac{2}{n} \sum_{k=m_n+1}^n \log|z_{k,n}| = \limsup_{n\to\infty} \frac{2}{n} \sum_{|z_{k,n}|\ge R} \log|z_{k,n}| < \eps
\]
by \eqref{1.3}({\it ii}). Thus we obtain from \eqref{5.4}, \eqref{1.1} and the above estimate that
\begin{align} \label{5.5}
\limsup_{n\to\infty} \hat{I}[\hat \tau_n] \le \limsup_{n\to\infty} \hat{I}[\tau_n] + \limsup_{n\to\infty} \frac{2}{n} \sum_{k=m_n+1}^n \log|z_{k,n}| < V_E + \eps.
\end{align}

We now follow a standard potential theoretic argument to show that $\tau=\mu_E.$ Let $K_M(z,t) := \min\left(-\log{|z-t|},M\right).$ It is clear that $K_M(z,t)$ is a continuous function in $z$ and $t$
on $\C\times\C$, and that $K_M(z,t)$ increases to
$-\log|z-t|$ as $M\to\infty.$ Using the Monotone
Convergence Theorem and the weak-* convergence of $\hat\tau_n\times\hat\tau_n$
to $\tau\times\tau,$ we obtain for the energy of $\tau$ that
\begin{align*}
I[\tau] &=-\iint \log|z-t|\,d\tau(z)\,d\tau(t) =
\lim_{M\to\infty} \left( \lim_{n\to\infty} \iint K_M(z,t)\,
d\hat\tau_n(z)\,d\hat\tau_n(t) \right) \\ &= \lim_{M\to\infty} \left(
\lim_{n\to\infty} \left( \frac{2}{n^2} \sum_{1\le j<k\le m_n} K_M(z_{j,n},z_{k,n}) + \frac{M}{n} \right) \right) \\ &\le
\lim_{M\to\infty} \left( \liminf_{n\to\infty} \frac{2}{n^2}
\sum_{1\le j<k\le m_n} \log\frac{1}{|z_{j,n}-z_{k,n}|} \right) \\ &= \liminf_{n\to\infty} \frac{m_n(m_n-1)}{n^2}
\hat{I}[\hat \tau_n] < V_E+\eps,
\end{align*}
where we applied \eqref{5.5} and $\lim_{n\to\infty} m_n/n = 1$ in the last estimate. Since $\eps>0$ is arbitrary, we conclude that $I[\tau]\le V_E$. Recall that $\supp(\tau) \subset \hat E = \overline{\C}\setminus\Omega_E,$ where $V_{\hat E}=V_E$ and $\mu_{\hat E}=\mu_E$ by \cite[pp. 79-80]{Ts}. Note also that $I[\nu]>V_{\hat E}$ for any probability measure $\nu\neq\mu_{\hat E},\ \supp(\nu)\subset \hat E$, see \cite[pp. 79-80]{Ts}. Hence $\tau=\mu_{\hat E}=\mu_E$ and \eqref{1.3}({\it i}) follows.

Let us turn to the converse statement \eqref{1.3} $\Rightarrow$ \eqref{1.2}. As in the first part of the proof, we note that
$\lim_{z\to\infty} (g_E(z,\infty)-\log|z|) = V_E.$ For any $\eps>0$, we choose $R>0$ so large that $E\subset D_R$ and $|g_E(z,\infty)-\log|z|-V_E|<\eps$ when $|z|\ge R.$ Thus we have from \eqref{1.3}({\it i}) that
\[
\frac{1}{n} \sum_{|z_{k,n}|\ge R} g_E(z_{k,n},\infty) \le \frac{1}{n} \sum_{|z_{k,n}|\ge R} \log|z_{k,n}| + \frac{o(n)}{n}(V_E+\eps).
\]
Increasing $R$ if necessary, we can achieve that
\[
\frac{1}{n} \sum_{|z_{k,n}|\ge R} \log|z_{k,n}| < \eps
\]
for large $n\in\N$ by \eqref{1.3}({\it ii}), which implies that
\begin{align} \label{5.6}
\limsup_{n\to\infty} \frac{1}{n} \sum_{|z_{k,n}|\ge R} g_E(z_{k,n},\infty) \le \eps.
\end{align}
On setting $g_E(z,\infty) = V_E - U^{\mu_E}(z),\ z\in\C,$ we continue $g_E(z,\infty)$ as a subharmonic function in $\C.$ Since $g_E(z,\infty)$ is now upper semi-continuous  in $\C,$ we obtain from \eqref{1.3}({\it i}) and Theorem 0.1.4 of \cite[p. 4]{ST} that
\begin{align} \label{5.7}
\limsup_{n\to\infty} \frac{1}{n} \sum_{|z_{k,n}| < R} g_E(z_{k,n},\infty) &= \limsup_{n\to\infty} \int_{D_R} g_E(z,\infty)\,d\tau_n(z) \le \int_{D_R} g_E(z,\infty)\,d\mu_E(z) \\ \nonumber &= V_E - \int U^{\mu_E}(z)\,d\mu_E(z) = V_E - I[\mu_E] = 0,
\end{align}
where the last equality follows as the energy $I[\mu_E] = V_E,$ see \cite[p. 55]{Ts}. Observe from the definition of $m_E(Z_n)$ and \eqref{5.6}-\eqref{5.7} that
\begin{align*}
0 &\le \limsup_{n\to\infty} m_E(Z_n) \le \limsup_{n\to\infty} \frac{1}{n} \sum_{k=1}^n g_E(z_{k,n},\infty) \le \eps.
\end{align*}
We now let $\eps\to 0,$ to obtain that
\begin{align} \label{5.8}
\lim_{n\to\infty} \frac{1}{n} \sum_{k=1}^n g_E(z_{k,n},\infty) = \lim_{n\to\infty} m_E(Z_n) = 0.
\end{align}
\end{proof}

\begin{remark} \label{rem5.1}
Since \eqref{1.1} and \eqref{1.2} imply \eqref{1.3}, and \eqref{1.3} implies \eqref{5.8} by the above proof, we arrive at
\[
\lim_{n\to\infty} \frac{1}{n} \sum_{k=1}^n g_E(z_{k,n},\infty) = 0.
\]
Hence the sets $Z_n$ satisfying \eqref{1.1} and \eqref{1.2} essentially avoid irregular points of $E$ (in the bulk).
\end{remark}

\begin{proof}[Proof of Theorem \ref{thm1.2}]
Using the definition of $m_E(Z_n)$ and \eqref{5.2}, we obtain that \eqref{1.7} implies \eqref{1.2} because
\begin{align*}
0 \le m_E(Z_n) &\le \frac{1}{n} \sum_{k=1}^n g_E(z_{k,n},\infty) = \int g_E(z,\infty)\,d\tau_n(z) \\ &= \int \left(\int\log|z-t|\,d\mu_E(t) - \log \ca(E) \right)\,d\tau_n(z) \\ &= \int\int\log|z-t|\,d\tau_n(z)\,d\mu_E(t) - \log \ca(E) \\ &= \int\log|P_n(t)|^{1/n}\,d\mu_E(t) - \log \ca(E) \le \log\|P_n\|_E^{1/n} - \log \ca(E).
\end{align*}
Since we assume that \eqref{1.1} holds true, \eqref{1.2} is equivalent to \eqref{1.3} by Theorem \ref{thm1.1}. Thus it remains to show that  \eqref{1.3} implies \eqref{1.7}. For any $\eps>0$, we find $R>0$ such that $E\subset D_R=\{z:|z|<R\}$ and
\[
\lim_{n\to\infty} \left( \prod_{|z_{k,n}|\ge R} |z_{k,n}| \right)^{1/n} < 1 + \eps
\]
by \eqref{1.3}({\it ii}). Since there are $o(n)$ numbers $z_{k,n}$ outside $D_R$ by \eqref{1.3}({\it i}), and since $\|z-z_{k,n}\|_E \le 2|z_{k,n}|$ for $|z_{k,n}|\ge R$, we obtain that
\begin{align*}
\limsup_{n\to\infty} \left\|\prod_{|z_{k,n}| \ge R} (z-z_{k,n})\right\|_E^{1/n} \le \limsup_{n\to\infty}\, 2^{o(n)/n} \left( \prod_{|z_{k,n}|\ge R} |z_{k,n}| \right)^{1/n} \le 1 + \eps.
\end{align*}
Let $\|P_n\|_E=|P_n(z_n)|,\ z_n\in E,$ and assume $\lim_{n\to\infty} z_n = z_0\in E$ by compactness. Define
\[
\hat\tau_n := \frac{1}{n} \sum_{|z_{k,n}|<R} \delta_{z_{k,n}},
\]
and note that $\hat\tau_n \stackrel{*}{\rightarrow} \mu_E$ as $n\to\infty$ by \eqref{1.3}({\it i}). For the polynomial
\[
\hat P_n(z) := \prod_{|z_{k,n}| < R} (z-z_{k,n}),
\]
we have by the Principle of Descent (Theorem I.6.8 of \cite{ST}) that
\begin{align*}
\limsup_{n\to\infty} |\hat P_n(z_n)|^{1/n} = \limsup_{n\to\infty} \exp \left(-U^{\hat\tau_n}(z_n)\right) \le \exp\left(-U^{\mu_E}(z_0)\right) = \ca(E),
\end{align*}
where the last equality is a consequence of Frostman's theorem \eqref{5.1} and the regularity of $E$. It is known that $\|P_n\|_E \ge (\ca(E))^n$, see \cite[p. 16]{AB}. We use this fact together with the above estimates to obtain that
\begin{align*}
 \ca(E) &\le \limsup_{n\to\infty} \|P_n\|_E^{1/n} \le \limsup_{n\to\infty} |\hat P_n(z_n)|^{1/n} \limsup_{n\to\infty} \left(\prod_{|z_{k,n}| \ge R} |z_n-z_{k,n}|\right)^{1/n} \\ &\le (1 + \eps)\,\ca(E).
\end{align*}
Letting $\eps\to 0,$ we obtain \eqref{1.7}.
\end{proof}

\begin{proof}[Proof of Theorem \ref{thm2.1}]
Given $r>0$, define the measures $\nu_k^r$ with $d\nu_k^r(z_{k,n} + re^{it}) = dt/(2\pi),\ t\in[0,2\pi).$ Let $\tau_n:=\tau(Z_n)$ and
\[
\tau_n^r:=\frac{1}{n}\sum_{k=1}^n \nu_k^r,
\]
and estimate
\begin{align} \label{5.9}
\left|\int\phi\,d\tau_n - \int\phi\,d\tau_n^r\right| \le \frac{1}{n}\sum_{k=1}^n \frac{1}{2\pi}\int_0^{2\pi} \left|\phi(z_{k,n}) - \phi(z_{k,n} + re^{it})\right|\,dt \le \omega_{\phi}(r).
\end{align}
We now assume that $E$ is bounded by finitely many piecewise smooth curves, and remove this assumption in the end of proof. Let $g_E(z,\infty) = V_E - U^{\mu_E}(z),\ z\in\C.$ Since $E$ is regular \cite[p. 104]{Ts}, we have that  $g_E(z,\infty) = 0,\ z\in\C\setminus\Omega_E.$ Consider the signed measure $\sigma:=\tau_n^r-\mu_E,\ \sigma(\C)=0.$ This measure is recovered from its potential by the formula
\[
d\sigma=-\frac{1}{2\pi}\left(\frac{\partial U^{\sigma}}{\partial n_+} + \frac{\partial U^{\sigma}}{\partial n_-}\right) ds,
\]
where $ds$ is the arclength on $\supp(\sigma) = \supp(\mu_E) \cup \left( \cup_{k=1}^n \{z:|z-z_{k,n}|=r\}\right)$, and $n_{\pm}$ are the inner and the outer normals. The above representation follows from Theorem 1.1 of \cite{Pr2}, see also Example 1.2 there. Let $D_R:=\{z:|z|<R\}$ be a disk containing the support of $\phi.$ We use Green's identity
\[
\iint_G u \Delta v\,dA =  \int_{\partial G} u\,\frac{\partial v}{\partial n}\,ds - \iint_G \nabla u \cdot \nabla v\,dA
\]
with $u=\phi$ and $v=U^{\sigma}$ in each connected component $G$ of $D_R\setminus\supp(\sigma).$ Since $U^{\sigma}$ is harmonic in $G$, we have that $\Delta U^{\sigma}=0$ in $G$. Adding Green's identities for all domains $G$, we obtain that
\begin{align} \label{5.10}
\left|\int\phi\,d\sigma\right| = \frac{1}{2\pi} \left| \iint_{D_R} \nabla \phi \cdot \nabla U^{\sigma} \,dA \right| \le \frac{1}{2\pi} \sqrt{D[\phi]}\,\sqrt{D[U^{\sigma}]},
\end{align}
by the Cauchy-Schwarz inequality. It is known that $D[U^{\sigma}]=2\pi I[\sigma]$ \cite[Thm 1.20]{La}, where $I[\sigma]=-\iint \log|z-t|\,d\sigma(z)\,d\sigma(t) = \int U^{\sigma}\,d\sigma$ is the energy of $\sigma$. We observe that $\int U^{\mu_E}\,d\mu_E = I[\mu_E] = V_E$, so that
\[
I[\sigma]=\int U^{\tau_n^r}\,d\tau_n^r - 2\int U^{\mu_E}\,d\tau_n^r + V_E.
\]
Since $g_E(z,\infty)$ is harmonic in $\Omega_E$, the mean value property gives that
\begin{align*}
-\int U^{\mu_E}\,d\tau_n^r &= \int \left( g_E(z,\infty) - V_E \right)\, d\tau_n^r(z) \\ &= \frac{1}{n} \left(\sum_{d_E(z_{k,n})\le r} \int g_E(z,\infty)\,d\nu_k^r(z) + \sum_{d_E(z_{k,n})>r} \int g_E(z,\infty)\,d\nu_k^r(z) \right) - V_E \\ &\le \frac{1}{n} \left(\sum_{d_E(z_{k,n})\le r} \max_{d_E(z)\le 2r} g_E(z,\infty) + \sum_{d_E(z_{k,n})>r} g_E(z_{k,n},\infty)\right) - V_E \\ &\le\max_{d_E(z)\le 2r} g_E(z,\infty) + m_E(Z_n) - V_E.
\end{align*}
Taking into account the representation \cite[p. 22]{ST}
\[
U^{\nu_k^r}(z)=-\log\max(r,|z-z_{k,n}|),\quad z\in\C,
\]
we further deduce that
\begin{align*}
\int U^{\tau_n^r}\,d\tau_n^r &= \frac{1}{n^2} \sum_{j,k=1}^n \int U^{\nu_k^r}\,d\nu_j^r \le \frac{1}{n^2} \left(\sum_{j\neq k} \log\frac{1}{|z_{j,n}-z_{k,n}|} - n\log{r}\right) \\ &= \frac{n-1}{n} \hat{I}[\tau_n] - \frac{\log{r}}{n},
\end{align*}
and combine the energy estimates to obtain
\[
I[\sigma] \le 2m_E(Z_n) + \frac{n-1}{n} \hat{I}[\tau_n] - V_E - \frac{\log{r}}{n} + 2\max_{d_E(z)\le 2r} g_E(z,\infty).
\]
Using \eqref{5.9}, \eqref{5.10} and the above estimate, we proceed to \eqref{2.1}-\eqref{2.2} via the following
\begin{align*}
\left|\int\phi\,d\tau_n - \int\phi\,d\mu_E\right| &\le \left|\int\phi\,d\tau_n - \int\phi\,d\tau_n^r\right| + \left|\int\phi\,d\tau_n^r - \int\phi\,d\mu_E\right| \\ &\le \omega_{\phi}(r) + \frac{\sqrt{D[\phi]}\sqrt{D[U^{\sigma}]}}{2\pi} = \omega_{\phi}(r) + \sqrt{\frac{D[\phi]}{2\pi}}\,\sqrt{I[\sigma]}.
\end{align*}
Thus we proved the result for sets bounded by finitely many piecewise smooth curves. To show that \eqref{2.1}-\eqref{2.2} hold for an arbitrary compact set $E$  of positive capacity, we approximate $E$ by a decreasing sequence $E_m,\ m\in\N,$  of compact sets with piecewise smooth boundaries. Let $\eps_1=1$ and consider an open cover of $E$ by the disks $\{D(z,\eps_1)\}_{z\in E},$ where $D(z,\eps_1)$ is centered at $z$ and has radius $\eps_1.$ There exists a finite subcover such that
$E\subset\cup_{k=1}^{N_1} D(c_{k,1},\eps_1).$ Define $E_1:=\cup_{k=1}^{N_1} \overline D(c_{k,1},\eps_1).$ We construct the sets $E_m$ inductively for $m\ge 2.$ Set $\eps_m:={\rm dist}(E,\partial E_{m-1})/2 >0.$ As before, we have a finite subcover such that
\[
E\subset\bigcup_{k=1}^{N_m} D(c_{k,m},\eps_m),\quad m\in\N,
\]
where $c_{k,m}\in E,\ k=1,\ldots,N_m.$ Let
\[
E_m := \bigcup_{k=1}^{N_m} \overline{D(c_{k,m},\eps_m)},\quad m\in\N,
\]
and note that $E_m\subset E_{m-1}$ and $\eps_m\le\eps_{m-1}/2,\ m\ge 2.$ Clearly, the boundary of every $E_m$ consists of finitely many piecewise smooth curves, and each curve is composed of finitely many circular arcs. Thus \eqref{2.1}-\eqref{2.2} hold for every $E_m,\ m\in\N.$ Observe that $\lim_{m\to\infty} \eps_m = 0,$ so that
\[
E=\bigcap_{m=1}^{\infty} E_m.
\]
If $g_{E_m}(z,\infty)$ is the Green function for $\overline\C\setminus E_m$ with pole at $\infty$, then
\[
g_{E_m}(z,\infty)\le g_E(z,\infty),\quad z\in\C,
\]
for any $m\in\N,$ by Corollary 4.4.5 of \cite[p. 108]{Ra}. This gives that
\[
\max_{d_{E_m}(z)\le 2r} g_{E_m}(z,\infty) \le \max_{d_{E_m}(z)\le 2r} g_E(z,\infty),\quad m\in\N.
\]
Since $g_E(z,\infty)$ is subharmonic in $\C$ and harmonic in $\Omega_E$, the maximum on the right of the above inequality is attained on the set $\{z\in\C:d_{E_m}(z)=2r\}\subset\Omega_E.$  We have that
\[
\lim_{m\to\infty} \max_{d_{E_m}(z)=2r} g_E(z,\infty) = \max_{d_E(z)=2r} g_E(z,\infty),
\]
because $d_{E_m}(z) \le d_E(z) \le d_{E_m}(z) + \eps_m,\ z\in\C,$ by the triangle inequality. Thus
\begin{align} \label{5.11}
\limsup_{m\to\infty} \max_{d_{E_m}(z)\le 2r} g_{E_m}(z,\infty) \le \max_{d_E(z)\le 2r} g_E(z,\infty).
\end{align}
Furthermore, Theorem 4.4.6 of \cite[p. 108]{Ra} implies that
\[
\lim_{m\to\infty} g_{E_m}(z,\infty) = g_E(z,\infty),\quad z\in\Omega_E,
\]
so that
\begin{align} \label{5.12}
\lim_{m\to\infty} m_{E_m}(Z_n) = m_E(Z_n).
\end{align}
Recall that $g_{E_m}(z,\infty)=V_{E_m}-U^{\mu_{E_m}}(z),\ z\in\C,$ and the same formula holds with $E_m$ replaced by $E$. Using Theorem 5.1.3 of \cite[p. 128]{Ra}, we obtain that
\begin{align} \label{5.13}
\lim_{m\to\infty} V_{E_m} = V_E,
\end{align}
which gives that
\[
\lim_{m\to\infty} U^{\mu_{E_m}}(z) = U^{\mu_E}(z),\quad z\in\Omega_E.
\]
Since $\supp(\mu_{E_m})\subset\partial\Omega_{E_m}\subset\partial E_m,\ m\in\N,$ we can select a subsequence of measures $\mu_j:=\mu_{E_{m_j}}$ such that $\mu_j \stackrel{*}{\rightarrow} \mu,$ see \cite[p. 3]{ST}. It follows that $\mu$ is a probability measure supported on $\partial\Omega_E,$ as $E=\cap_{m=1}^{\infty} E_m$ and $\overline\Omega_{E_m} \subset \Omega_{E_{m+1}} \subset \Omega_E,\ m\in\N.$ Hence we have by the weak-* convergence that
\[
\lim_{j\to\infty} U^{\mu_j}(z) = U^{\mu}(z),\quad z\in\Omega_E,
\]
which means that
\[
U^{\mu}(z) = U^{\mu_E}(z),\quad z\in\Omega_E.
\]
Since $\supp(\mu)\subset\partial\Omega_E$ and $\supp(\mu_E)\subset\partial\Omega_E$, Carleson's Unicity Theorem \cite[p. 123]{ST} implies that $\mu=\mu_E$. This argument applies to any subsequence of the sequence $\mu_{E_m},\ m\in\N,$ therefore we conclude that $\mu_{E_m} \stackrel{*}{\rightarrow} \mu_E$ as $m\to\infty.$ Consequently,
\[
\lim_{m\to\infty} \int \phi\,d\mu_{E_m} = \int \phi\,d\mu_E.
\]
We now pass to the limit in \eqref{2.1} stated for $E_m$, as $m\to\infty$, and use the above equation together with \eqref{5.11}, \eqref{5.12} and \eqref{5.13} to prove that \eqref{2.1}-\eqref{2.2} also hold for $E$.

\end{proof}

\begin{proof}[Proof of Theorem \ref{thm2.2}]
Since $Z_n\subset E$ and $\lim_{z\to\infty} (g_E(z,\infty)-\log|z|) = V_E,$ the function $\frac{1}{n}\log|P_n(z)|+ V_E - g_E(z,\infty)$ is harmonic in $\Omega_E$ (including $\infty$, where it has value $0$). By the Maximum-Minimum Principle, it is sufficient to prove \eqref{2.5} for $z\in\Gamma_n:=\partial\Omega_n=\{z\in\Omega_E: g_E(z,\infty)=1/n\},$ where $n\ge 2.$ Define the distance between $E$ and $ \Gamma_n$ by
\[
\rho_n:=\dist(E,\Gamma_n)=\min_{t\in E,w\in\Gamma_n} |t-w|,
\]
and note that
\begin{align} \label{5.14}
\rho_n \le |z-t| + d_E(t),\quad t\in\C,\ z\in\Gamma_n,
\end{align}
by the triangle inequality. Let $\diam(E):=\max_{t,w\in E} |t-w|$ be the diameter of $E$, and set $R:=\diam(E)+1.$ We apply Theorem \ref{thm2.1} with the function
\begin{align} \label{5.15}
\phi(t):=\min\left(\log(|z-t|+d_E(t))-\log{R},0\right), \quad t\in\C,\ z\in\Gamma_n.
\end{align}
It is clear that $\supp(\phi)\subset D(z,R):=\{t\in\C:|t-z|<R\}$. Furthermore, $E\subset\supp(\phi)$ for all large $n\in\N,$ because $d_E(z)\to 0$ for $z\in\Gamma_n$ as $n\to\infty$ by the continuity of $g(z,\infty)$ in $\overline\Omega_E.$ One readily finds from the triangle inequality that
\[
\left| |z-t_1| - |z-t_2| \right| \le |t_1-t_2| \quad \forall\ t_1,t_2\in\C,
\]
and
\[
\left| d_E(t_1) - d_E(t_2) \right| \le |t_1-t_2| \quad \forall\ t_1,t_2\in\C,
\]
see also Federer \cite[p. 434]{Fed} for more details about the function $d_E.$ Hence the function $f(t):=|z-t|+d_E(t),\ t\in\C,$ satisfies the Lipschitz condition
\[
\left| f(t_1) - f(t_2) \right| \le 2 |t_1-t_2| \quad \forall\ t_1,t_2\in\C.
\]
Thus the partial derivatives $f_x$ and $f_y$ exist a.e. with respect to the area measure (and the linear measure on vertical and horizontal lines), and we obtain that
\[
|f_x(t)|\le 2 \quad\mbox{and}\quad |f_y(t)|\le 2 \quad \mbox{for a.e. } t=x+iy\in\C.
\]
Hence $\phi_x$ and $\phi_y$ also exist a.e. in the same sense, with
\[
|\phi_x(t)|\le \frac{2}{|z-t|+d_E(t)} \le \frac{2}{\rho_n}
\]
and
\[
|\phi_y(t)|\le \frac{2}{|z-t|+d_E(t)} \le \frac{2}{\rho_n}
\]
for a.e. $t=x+iy\in\C$ by \eqref{5.14}. This gives the estimates
\[
|\phi(t_1)-\phi(t_2)|\le |t_1-t_2|\, \sup_{\C} \sqrt{\phi_x^2+\phi_y^2} \le \frac{2\sqrt{2}}{\rho_n}\, |t_1-t_2|
\]
and
\begin{align} \label{5.16}
\omega_\phi(r) \le \frac{2\sqrt{2}}{\rho_n}\, r.
\end{align}
Furthermore, we obtain for the Dirichlet integral
\begin{align*}
D[\phi] &= \iint_{\C} (\phi_x^2 +\phi_y^2)\,dA \le \iint_{D(z,R)} \frac{8\,dA(t)}{(|z-t|+d_E(t))^2} \\ &\le \iint_{|z-t|\le\rho_n} \frac{8\,dA(t)}{(|z-t|+d_E(t))^2} + \iint_{\rho_n\le |z-t|\le R} \frac{8\,dA(t)}{(|z-t|+d_E(t))^2} \\ &\le 8\left( \frac{1}{\rho_n^2}\,\pi\rho_n^2 + \int_0^{2\pi} \int_{\rho_n}^R \frac{r\,dr}{r^2}\,d\theta \right) = 8\pi\left(1 + 2\log\frac{R}{\rho_n}\right)
\end{align*}
by $\supp(\phi)\subset D(z,R)$ and \eqref{5.14}.
If the Green function $g(z,\infty)$ satisfies the H\"older condition \eqref{2.3}, then $\rho_n\ge (Cn)^{-1/s}$ with $C=C(E)>0$, and $D[\phi]=O(\log{n})$ as $n\to\infty.$ Letting $r=n^{-1/2-1/s},$ we obtain that $\omega_{\phi}(r)=O(n^{-1/2})$ by \eqref{5.16}. Since $2r\le\rho_n$ for large $n$,  we have that
\[
\max_{d_E(z)\le 2r} g_E(z,\infty) \le \frac{1}{n}.
\]
Applying the above estimates and \eqref{2.4} in \eqref{2.1}-\eqref{2.2}, we arrive at
\begin{align} \label{5.17}
\left|\frac{1}{n} \sum_{k=1}^n \phi(z_{k,n}) - \int\phi\,d\mu_E\right| &\le O(n^{-1/2}) + O(\sqrt{\log{n}}) \left( O\left(\frac{\log{n}}{n}\right) + \frac{2}{n} \right)^{1/2} \\ \nonumber &\le O\left(\frac{\log{n}}{\sqrt{n}}\right)\quad \mbox{as }n\to\infty,
\end{align}
where we also used that $m_E(Z_n)=0$. Note that all constants in $O$ terms are independent of the point $z\in\Gamma_n$, of the set $Z_n$, as well as of $n\ge 2.$ It remains to observe that $\phi(t)=\log|z-t| - \log{R}$ for $t\in E$, so that
\begin{align*}
\frac{1}{n} \sum_{k=1}^n \phi(z_{k,n}) - \int\phi\,d\mu_E &= \frac{1}{n}\log|P_n(z)| - \int\log|z-t|\,d\mu_E(t) \\ &= \frac{1}{n}\log|P_n(z)| + V_E - g_E(z,\infty),\quad z\in\Gamma_n,
\end{align*}
by \eqref{5.2}. Thus \eqref{2.5} follows from \eqref{5.17} by the Maximum-Minimum Principle. Further, we obtain from \eqref{2.5} for $z\in\Gamma_n$ that
\begin{align} \label{5.18}
\log \|P_n\|_E \le \log \|P_n\|_{\Gamma_n} \le C_2\,\sqrt{n}\log{n} - nV_E + 1, \quad n\ge 2,
\end{align}
which proves \eqref{2.6}.

For the proof of \eqref{2.7}, we write
\[
P_n'(w) = \frac{1}{2\pi i} \int_{|t-w|=\rho_n} \frac{P_n(t)\,dt}{(t-w)^2}, \quad w\in\partial E.
\]
Hence
\[
|P_n'(w)| \le \frac{\max_{|t-w|=\rho_n} |P_n(t)|}{\rho_n}, \quad w\in\partial E,
\]
and
\[
\|P_n'\|_E \le \rho_n^{-1} \|P_n\|_{\Gamma_n} \le (Cn)^{1/s} \|P_n\|_{\Gamma_n},
\]
because $\rho_n \ge (Cn)^{-1/s}$. Note that
\[
\exp\left( - n(n-1) \hat I[\tau(Z_n)] \right) = \prod_{k=1}^n |P_n'(z_{k,n})| \le (Cn)^{n/s} \|P_n\|_{\Gamma_n}^n.
\]
Thus \eqref{2.7} follows from \eqref{5.18} and the above equation.

\end{proof}

\begin{proof}[Proof of Corollary \ref{cor2.3}]
We first observe that the Fekete points $\cF_n$ satisfy
\begin{align} \label{5.19}
\hat I[\tau(\cF_n)] \le V_E, \quad n\ge 2,
\end{align}
This fact holds because the discrete energies of Fekete sets increase to $V_E$ with $n,$ see \cite[p. 153]{Ra}. Hence \eqref{2.4} holds true and Theorem \ref{2.2} applies to $\cF_n$.

It turns out that \eqref{5.19} is also true for the Leja points $\mathcal L_n=\{\xi_k\}_{k=0}^{n-1},\ n\in\N.$ Consider the corresponding Leja polynomials $L_n(z):=\prod_{k=0}^{n-1} (z-\xi_k),\ n\in\N,$ and recall that \[
\|L_n\|_E = |L_n(\xi_n)| = \prod_{k=0}^{n-1} |\xi_n-\xi_k|
\]
by definition. Hence we have for the Vandermonde determinant
\begin{align*}
|V(\mathcal L_n)| = \prod_{0\le j<k\le n-1} |\xi_j-\xi_k| = \prod_{k=1}^{n-1} |L_k(\xi_k)| = \prod_{k=1}^{n-1} \|L_k(\xi_k)\|_E.
\end{align*}
Since $\|P_k\|_E \ge (\ca(E))^k$ holds for any monic polynomial $P_k,\ \deg(P_k)=k,$ see \cite[p. 16]{AB}, we obtain that
\[
|V(\mathcal L_n)| \ge (\ca(E))^{n(n-1)/2}
\]
and
\[
\hat{I}[\tau(\mathcal L_n)] = - \log |V(\mathcal L_n)|^{\frac{2}{n(n-1)}} \le V_E.
\]

\end{proof}

\begin{proof}[Proof of Theorem \ref{thm2.4}]
Suppose that  $\supp(\phi)\subset\{z:|z|\le R\},$ and that $\phi$ satisfies the Lipschitz condition $|\phi(z)-\phi(t)|\le A|z-t|,\ z,t\in\C.$ It is clear that $\omega_{\phi}(r)\le Ar.$ Also, $|\phi_x|\le A$ and $|\phi_y|\le A$ a.e. in $\C,$ so that $D[\phi]\le 2\pi R^2 A^2.$ If the Green function $g(z,\infty)$ satisfies the H\"older condition \eqref{2.3}, then $\rho_n=\min_{t\in E,w\in\Gamma_n} |t-w| \ge (Cn)^{-1/s}$, with $C=C(E)>0$ and $0<s\le 1$, as in the proof of Theorem \ref{thm2.2}. Letting $r=n^{-2/s},$ we obtain that $\omega_{\phi}(r)\le A\,n^{-2/s}$. Since $2r\le\rho_n$ for large $n$,  we have that
\[
\max_{d_E(z)\le 2r} g_E(z,\infty) \le \frac{1}{n}.
\]
Hence \eqref{2.8} follows from \eqref{2.1}-\eqref{2.2} by combining the above estimates with \eqref{2.4}.
\end{proof}

\begin{proof}[Proof of Corollary \ref{cor2.5}]
We let $\phi(z)=\Re(z^m),\ z\in E,$ and extend this function outside of $E$ to a Lipschitz continuous function with compact support in $\C$. Then \eqref{2.8} holds true for this choice of $\phi$, with $m_E(Z_n)=0$ as $Z_n\subset E$. The same argument applies to $\phi(z)=\Im(z^m),\ z\in E,$ continued appropriately. Combining the estimates obtained from \eqref{2.8}, we arrive at \eqref{2.9}.
\end{proof}

\begin{proof}[Proof of Theorem \ref{thm2.6}]
In the fisrt part, we repeat the proof of Theorem \ref{thm2.4}. Namely, we assume that  $\supp(\phi)\subset\{z:|z|\le R\},$ and that $\phi$ satisfies the Lipschitz condition $|\phi(z)-\phi(t)|\le A|z-t|,\ z,t\in\C.$ It is clear that $\omega_{\phi}(r)\le Ar.$ Also, $|\phi_x|\le A$ and $|\phi_y|\le A$ a.e. in $\C,$ so that $D[\phi]\le 2\pi R^2 A^2.$ If the Green function $g(z,\infty)$ satisfies the H\"older condition \eqref{2.3}, then $\rho_n\ge (Cn)^{-1/s}$, with $C=C(E)>0$ and $0<s\le 1$. Letting $r=n^{-2/s},$ we obtain that $\omega_{\phi}(r)\le A\,n^{-2/s}$. Since $2r\le\rho_n$ for large $n$,  we have that
\[
\max_{d_E(z)\le 2r} g_E(z,\infty) \le \frac{1}{n}.
\]
Noting that $V_E=\log{\ca(E)}=0,$ we seek an upper estimate for $\hat I[\tau(Z_n)]$, in order to apply Theorem \ref{thm2.1}. As $P_n$ has integer coefficients and simple zeros, we obtain that its discriminant $\Delta(P_n)=a_n^{2n-2} \left(V(Z_n)\right)^2$ is a non-zero integer \cite[p. 24]{Pra}. Hence $|\Delta(P_n)|\ge 1$ and
\[
\hat I[\tau(Z_n)] = - \frac{1}{n(n-1)} \log|\Delta(P_n)| + \frac{2}{n} \log|a_n| \le \frac{2}{n} \log|a_n|.
\]
Combining the above estimates in \eqref{2.1}-\eqref{2.2}, we obtain \eqref{2.10}.
\end{proof}

\begin{proof}[Proof of Corollary \ref{cor2.7}]
Since the roots $Z_n=\{z_{k,n}\}_{k=1}^n$ of $P_n$ come in complex conjugate pairs, we have that
\[
\frac{1}{n}\sum_{k=1}^n z_{k,n} = \frac{1}{n}\sum_{k=1}^n \Re(z_{k,n}).
\]
Hence we consider $\phi(z)=\Re(z),\ z\in E,$ and extend this function outside of $E$ to a Lipschitz continuous function with compact support in $\C$. Note that $m_E(Z_n)=0$ as $Z_n\subset E$. Also, $|a_n|\le M$. The electrostatic centroid of $E$ lies at the origin because of the set symmetry and uniqueness of the equilibrium measure $\mu_E$, i.e., $\int z\,d\mu_E(z) = 0$. Thus \eqref{2.11} follows from \eqref{2.10} by Theorem \ref{thm2.6}.
\end{proof}

\begin{proof}[Proof of Example \ref{ex2.8}]
We recall some basic facts from prime number theory, which may be found, for example, in Ingham \cite{Ing} and Davenport \cite{Dav}. Let $\pi(x)$ be the number of primes not exceeding $x$. The Prime Number Theorem states that
\[
\pi(x) = \frac{x}{\log{x}} + o\left(\frac{x}{\log{x}}\right) \quad \mbox{as } x\to \infty.
\]
It is equivalent to the following asymptotic formulas from prime number theory. If $p_m$ is the $m$th prime number, then \cite[p. 36]{Ing}
\begin{align} \label{5.20}
p_m = m\log{m} +o(m\log{m}) \quad \mbox{as } m\to\infty.
\end{align}
For the Chebyshev $\theta$-function, we have \cite[p. 13]{Ing}
\begin{align} \label{5.21}
\theta(x) := \sum_{p\le x} \log{p} = x + o(x) \quad \mbox{as } x\to\infty,
\end{align}
where the sum extends over all primes $p\le x.$

An asymptotic of the sum of consecutive primes may be found by using the method described in Rosser and Schoenfeld \cite[pp. 67-68]{RS}, which was carried out by Sal\'at and Zn\'am \cite{SZ}. We use a more precise form of the Prime Number Theorem proved by de la Vall\'ee Poussin, see \cite[p. 65]{Ing} and \cite[p. 113]{Dav}:
\begin{align} \label{5.22}
\pi(x) = \li\,x + O\left(xe^{-a\sqrt{\log{x}}}\right) \quad \mbox{as } x\to\infty,
\end{align}
where $a>0$ and
\[
\li\,x:=\int_2^x \frac{dt}{\log{t}}.
\]
Better error terms are certainly available now, but this classical result is sufficient for our purpose. Suppose that $f:[0,\infty)\to\R$ is a continuously differentiable function. Using the Stieltjes integral and integration by parts, we obtain by following \cite[p. 67]{RS} that
\begin{align*}
\sum_{p\le x} f(p) &= \int_2^x f(t)\,d\pi(t) = f(x)\pi(x) - \int_2^x f'(t)\pi(t)\,dt \\ &= \int_2^x \frac{f(t)\,dt}{\log{t}} + f(x)(\pi(x)-\li\,x) - \int_2^x f'(t)(\pi(t)-\li\,t)\,dt.
\end{align*}
If we set $f(x)=x$ and use \eqref{5.22} in the above formula, it gives that
\begin{align*}
\sum_{p\le x} p = \frac{x^2}{2\log{x}} + o\left(\frac{x^2}{\log{x}}\right) \quad \mbox{as } x\to \infty.
\end{align*}
Taking into account \eqref{5.20}, we arrive at the asymptotic
\begin{align} \label{5.23}
n = \sum_{m=1}^k p_m - k = \frac{k^2 \log k}{2} + o(k^2 \log k)\quad \mbox{as } k\to\infty.
\end{align}
Combining \eqref{5.20}, \eqref{5.21} and \eqref{5.23}, we obtain that
\begin{align*}
\log\left(\prod_{m=1}^k p_m \right) = \theta(p_k) \ge c_1 \sqrt{n\log n},\quad n\ge 2,
\end{align*}
with a constant $c_1>0.$ This implies our estimate for $\|P_n\|_D,$ because \[
|P_n(z)| = \left|\prod_{m=1}^k \left(\sum_{j=0}^{p_m-1} z^j\right) \right| \le \prod_{m=1}^k p_m = P_n(1), \quad |z|\le 1.
\]
Also, \eqref{5.23} immediately gives that
\[
\frac{k}{n} \ge \frac{c_2}{\sqrt{n\log{n}}},\quad n\ge 2,
\]
where $c_2>0.$
\end{proof}


\begin{thebibliography}{99}

\bibitem{AM} F. Amoroso and M. Mignotte, {\em On the distribution of the roots of polynomials}, Ann. Inst. Fourier (Grenoble) 46 (1996), 1275--1291.
\bibitem{AB2} V. V. Andrievskii and H.-P. Blatt, {\em A discrepancy theorem on quasiconformal curves},  Constr. Approx. 13 (1997), 363--379.
\bibitem{AB3} V. V. Andrievskii and H.-P. Blatt, {\em Erd\H{o}s-Tur\'an type theorems on piecewise smooth curves and arcs}, J. Approx. Theory 88 (1997), 109--134.
\bibitem{AB} V. V. Andrievskii and H.-P. Blatt, Discrepancy of Signed Measures and Polynomial Approximation, Springer-Verlag, New
York, 2002.
\bibitem{Bl} H.-P. Blatt, {\em On the distribution of simple zeros of polynomials}, J. Approx. Theory 69 (1992), 250--268.
\bibitem{BM} H.-P. Blatt and H. N. Mhaskar, {\em A general discrepancy theorem}, Ark. Mat. 31 (1993), 219--246.
\bibitem{BSS} H.-P. Blatt, E. B. Saff and M. Simkani, {\em Jentzsch-Szeg\H{o} type theorems for the zeros of best approximants}, J. London Math. Soc. 38 (1988), 307-316.
\bibitem{BBCL} T. Bloom, L. Bos, C. Christensen, and N. Levenberg, {\em Polynomial interpolation of holomorphic functions in $\C$ and $\C^n$},  Rocky Mountain J. Math. 22 (1992), 441--470.
\bibitem{CT} L. Carleson and V. Totik, {\em H\"{o}lder continuity of Green's functions}, Acta Sci. Math. 70 (2004), 557--608.
\bibitem{Dav} H. Davenport, Multiplicative Number Theory, Springer-Verlag, New York, 2000.
\bibitem{ET} P. Erd\H{o}s and P. Tur\'an, {\em On the distribution
of roots of polynomials}, Ann. Math. 51 (1950), 105--119.
\bibitem{ET2} P. Erd\H{o}s and P. Tur\'an, {\em On the uniformly-dense distribution
of certain sequences of points}, Ann. Math. 41 (1940), 162--173.
\bibitem{FR} C. Favre and J. Rivera-Letelier, {\em Equidistribution quantitative des points de petite hauteur sur la droite projective}, Math. Ann. 335 (2006), 311--361; Corrigendum in Math. Ann. 339 (2007), 799--801.
\bibitem{Fed} H. Federer, {\em Curvature measures}, Trans. Amer. Math. Soc. 93 (1959), 418--491.
\bibitem{Fe} M. Fekete, {\em \"{U}ber die Verteilung der Wurzeln bei
gewissen algebraischen Gleichungen mit ganzzahligen Koeffizienten},
Math. Zeit. 17 (1923), 228--249.
\bibitem{Gai} D. Gaier, Lectures on Complex Approximation, Birkh\'{a}user, Boston, 1987.
\bibitem{Ga} T. Ganelius, {\em Sequences of analytic functions and their zeros}, Ark. Mat. 3 (1953), 1--50.
\bibitem{Go} M. G\"otz, {\em On the distribution of Leja-G\'orski points}, J. Comput. Anal. Appl. 3 (2001), 223--241.
\bibitem{Hus} J. Huesing, {\em Estimates for the discrepancy of a signed measure using its energy norm}, J. Approx. Theory 109 (2001) 1--29.
\bibitem{Ing} A. E. Ingham, The Distribution of Prime Numbers, Cambridge Univ. Press, London, 1932.
\bibitem{Je} R. Jentzsch, {\em Untersuchungen zur Theorie der Folgen
analytischer Funktionen}, Acta Math. 41 (1917), 219--270.
\bibitem{Kl} W. Kleiner, {\em Une condition de Dini-Lipschitz dans la th\'eorie du potentiel}, Ann. Polon. Math. 14 (1964), 117--130.
\bibitem{Ko} J. Korevaar, {\em Fekete extreme points and related problems}, in ``Approximation theory and function series" (Budapest, 1995),  35--62, Bolyai Soc. Math. Stud., 5, J\'anos Bolyai Math. Soc., Budapest, 1996.
\bibitem{KM} J. Korevaar and M. A. Monterie, {\em Fekete potentials and polynomials for continua}, J. Approx. Theory 109 (2001), 110--125.
\bibitem{La} N. S. Landkof, Foundations of Modern Potential Theory,
Springer-Verlag, New York-Heidelberg, 1972.
\bibitem{Pra} V. V. Prasolov, Polynomials, Springer, Berlin, 2004.
\bibitem{Pr2} I. E. Pritsker, {\em How to find a measure from its potential}, Comput. Methods Funct. Theory 8 (2008), 597-614.
\bibitem{Pr3} I. E. Pritsker, {\em Means of algebraic numbers in the unit disk}, C. R. Acad. Sci. Paris, Ser. I 347 (2009), 119--122.
\bibitem{Pr4} I. E. Pritsker, {\em Distribution of algebraic numbers}, submitted to J. reine angew. Math.
\bibitem{Ra} T. Ransford, Potential Theory in the Complex Plane,
Cambridge University Press, Cambridge, 1995.
\bibitem{RS} J. B. Rosser and L. Schoenfeld, {\em Aproximate formulas for some functions of prime numbers}, Illinois J. Math. 6 (1962), 64--94.
\bibitem{ST} E. B. Saff and V. Totik, Logarithmic Potentials with
External Fields, Springer-Verlag, Berlin, 1997.
\bibitem{SZ} T. Sal\'at and S. Zn\'am, {\em On the sums of prime powers}, Acta Fac. Rerum Natur. Univ. Comenian Math. 21 (1968), 21--24.
\bibitem{Sch} I. Schur, {\em \"{U}ber die Verteilung der Wurzeln bei
gewissen algebraischen Gleichungen mit ganzzahligen Koeffizienten},
Math. Zeit. 1 (1918), 377--402.
\bibitem{Si} C. L. Siegel, {\em The trace of totally positive and real
algebraic integers}, Ann. Math. 46 (1945), 302--312.
\bibitem{Sj1} P. Sj\"ogren, {\em Estimates of mass distributions from their potentials and energies}, Ark. Mat. 10 (1972), 59--77.
\bibitem{Sj2} P. Sj\"ogren, {\em On the regularity of the distribution of the Fekete points of a compact surface in $\mathbb{R}^n$}, Ark. Mat. 11 (1973), 147--151.
\bibitem{Su} T. Sugawa, {\em Uniformly perfect sets: Analytic and geometric aspects}, Sugaku Expositions 16 (2003), 225--242.
\bibitem{Sz1} G. Szeg\H{o}, {\em \"{U}ber die Nullstellen von Polynomen,
die in einem Kreis gleichm\"{a}ssig konvergieren}, Sitzungsber. Ber.
Math. Ges. 21 (1922), 59--64.
\bibitem{Sz2} G. Szeg\H{o}, {\em Bemerkungen zu einer Arbeit von Herrn M.
Fekete: \"{U}ber die Verteilung der Wurzeln bei gewissen
algebraischen Gleichungen mit ganzzahligen Koeffizienten}, Math.
Zeit. 21 (1924), 203-208.
\bibitem{To2} V. Totik, {\em Distribution of simple zeros of polynomials}, Acta Math. 170 (1993), 1--28.
\bibitem{To} V. Totik, Metric Properties of Harmonic Measures, Mem. Amer. Math. Soc. 184 (2006), no. 867.
\bibitem{Ts} M. Tsuji, Potential Theory in Modern Function Theory,
Chelsea Publ. Co., New York, 1975.
\bibitem{Wa} J. L. Walsh, Interpolation and Approximation by Rational Functions in the Complex Domain, Colloquium Publications, Vol. 20, Amer. Math. Soc., Providence, 1969.

\end{thebibliography}
\end{document}